\newtheorem{theorem}{Theorem}[section]
\newtheorem{lemma}[theorem]{Lemma}
\newtheorem{corollary}[theorem]{Corollary}
\newtheorem{example}[theorem]{Example}
\newtheorem{proposition}[theorem]{Proposition}
\newtheorem{remark}[theorem]{Remark}
\numberwithin{equation}{section}
\DeclareMathOperator*{\esssup}{ess\,sup}
\DeclareMathOperator*{\osc}{osc}
\newcommand{ \mr }{ \mathbb{R} }
\newcommand{ \ba }{ \mathbf{a} }
\newcommand{ \m }{ \mathcal{M} }
\newcommand{ \hm }{\widehat{m}}
\def\Xint#1{\mathchoice
    {\XXint\displaystyle\textstyle{#1}}%
     {\XXint\textstyle\scriptstyle{#1}}%
     {\XXint\scriptstyle\scriptscriptstyle{#1}}%
     {\XXint\scriptstyle\scriptscriptstyle{#1}}%
	\!\int}
\def\XXint#1#2#3{{\setbox0=\hbox{$#1{#2#3}{\int}$}
	\vcenter{\hbox{$#2#3$}}\kern-.5\wd0}}
\date\today
\begin{document}

\title[Quasilinear equations with Morrey data]{Global  H\"older continuity of solutions to quasilinear equations with Morrey data}

\author[S.-S. Byun, D.K. Palagachev, P. Shin]{Sun-Sig Byun\and Dian K. Palagachev\and Pilsoo Shin}

\address{Sun-Sig Byun: Seoul National University, Department of Mathematical Sciences and Research Institute of Mathematics, Seoul 08826, Korea}
\email{byun@snu.ac.kr}

\address{Dian K. Palagachev: Politecnico di Bari,
Dipartimento di Meccanica, Matematica e Management,
Via Edoardo Orabona 4, 70125 Bari, Italy}
\email{dian.palagachev@poliba.it}

\address{Pilsoo Shin: Department of Mathematics, Kyonggi University, Suwon 16227, Republic of Korea}
\email{shinpilsoo.math@kgu.ac.kr}

\keywords{Quasilinear elliptic operator; Coercive boundary value problem; Equations with measure data; $m$-Laplacean; Weak solution; Controlled growths; Natural growths; Morrey space; Variational capacity; Essential boundedness; H\"older continuity}

\subjclass[2020]{Primary 35J60, 35B65; Secondary 35R05, 35R06, 35B45, 35J92, 46E30}

\begin{abstract}
We deal with  general quasilinear divergence-form coercive operators whose   
prototype is the $m$-Lap\-la\-ce\-an operator. The nonlinear terms are given by Carath\'eodory functions and satisfy 
controlled growth  structure conditions with data belonging to suitable Morrey spaces. The fairly non-regular boundary of the underlying domain is supposed to satisfy a capacity density condition which allows domains with exterior corkscrew property.

We prove global boundedness and 
H\"older continuity up to the boundary for the weak solutions of such equations, generalizing this way the classical $L^p$-result of Ladyzhenskaya and Ural'tseva to the settings of the Morrey spaces.
\end{abstract}

\maketitle

\section{Introduction}

The general aim of the present article is to obtain sufficient conditions ensuring boundedness and H\"older continuity up to the boundary for the weak solutions to very general quasilinear elliptic equations with discontinuous ingredients which are controlled within the Morrey functional scales. Precisely, we deal with weak solutions $u\in W^{1,m}_0(\Omega)$ of the  Dirichlet problem
\begin{equation}\label{1}
\begin{cases}
\mathrm{div\,}\big(\ba(x,u,Du)\big)=b(x,u,Du) + \nu & \textrm{in}\ \Omega\\
u=0 & \textrm{on}\ \partial\Omega,
\end{cases} 
\end{equation}
where $\Omega\subset\mr^n,$ $n\geq2,$ is a bounded domain with non-smooth boundary $\partial\Omega$, 
$m\in (1,n],$  $\nu$ is a signed Radon measure with finite total mass $|\nu|(\Omega)<\infty$, and $\ba\colon\Omega\times\mr\times\mr^n\to\mr^n$ and  $b\colon\Omega\times\mr\times\mr^n\to\mr$ are Carath\'eodory maps. Let us stress the reader attention at the very beginning that prototypes of the quasilinear equations studied are these  for the \textit{$m$-Laplace} operator $\mathrm{div\,}\big(|Du|^{m-2}Du\big)$ with $m>1,$ or these for the \textit{$m$-area} type operator $\mathrm{div\,}\left(\big(A+|Du|^2\big)^{\frac{m-2}{2}}Du\right)$ with $m\geq2$ and $A>0.$

Regarding the nonlinear terms in \eqref{1}, we assume \textit{controlled growths} with respect to $u$ and $Du,$ that is,
$$
\begin{cases}
|\ba(x,u,Du)| =\mathcal{O}\left(\varphi(x)+|u|^{\frac{{m^*}(m-1)}{m}}+|Du|^{m-1}\right),\\[4pt]
|b(x,u,Du)| =\mathcal{O}\left(\psi(x)+|u|^{{m^*}-1}+|Du|^{\frac{m({m^*}-1)}{{m^*}}}\right)
\end{cases}
$$ 
as $|z|,|Du|\to\infty,$ and \textit{coercivity} of the differential operator considered
$$
\ba(x,u,Du)\cdot Du\geq\gamma|Du|^m-\Lambda|u|^{{m^*}}-\Lambda\varphi(x)^\frac{m}{m-1}
$$
with non-negative functions $\varphi$ and $\psi$
and  constants $\gamma>0,$  $\Lambda\geq0,$ and  where ${m^*}$ is the Sobolev conjugate of $m.$
In what follows, we will assume that $\varphi$ and $\psi$ are non-negative measurable functions belonging to suitable \textit{Morrey spaces}. 
Namely, we suppose
\begin{equation}\label{2}
\begin{array}{lll}
\varphi\in L^{p,\lambda}(\Omega) &\text{with}\ & p>\frac{m}{m-1},\ \lambda\in(0,n) \ \text{and}\ (m-1)p+\lambda>n \\[4pt]
\psi\in L^{q,\mu}(\Omega), &\text{with} \  &q\geq1,\ \mu\in(0,n)\ \text{and}\ mq+\mu>n.
\end{array}
\end{equation}
A given Radon measure $\nu$ satisfies
\begin{equation}\label{2-1}
\nu\in \mathcal{R}^{1,\omega}(\Omega), \quad \text{with} \quad \omega\in(0,n)\ \text{and}\ m+\omega>n,
\end{equation}
which we will specify later in the next section. It is worth noting that $\varphi \in L^{\frac{m}{m-1}}(\Omega)$, $\psi\in L^{q,\mu}(\Omega)$ (or $\psi \in L^\frac{nm}{nm+m-n}(\Omega)$) and $\nu \in \mathcal{R}^{1,\omega}$, together with the controlled growths
are the \textit{minimal} hypotheses on the data under which the concept of $W^{1,m}_0(\Omega)$-weak solution to \eqref{1}
makes sense.

The non-regular boundary of $\Omega$ will be assumed to satisfy a \textit{density condition} expressed in terms of variational $m$-capacity  (see \eqref{3} below) which requires the complement $\mr^n\setminus\Omega$ to be \textit{uniformly $m$-thick}. This notion is a natural generalization of the \textit{measure density condition,} known also as \textit{(A)-condition} of Ladyzhenskaya and Ural’tseva (cf. \cite{LU1,LU2,LU}), which holds for instance when each point of $\partial\Omega$ supports the \textit{exterior cone property}, excluding this way exterior spikes on $\partial\Omega.$ In that sense, the uniform $m$-thickness condition is satisfied by domains with $C^1$-smooth or Lipschitz continuous boundaries, but it holds also when $\partial\Omega$ is flat in the sense of Reifenberg, including this way boundaries with fractal structure such as the von Koch snowflake. Anyway, the class of domains verifying the capacity density condition \eqref{3} goes beyond these common examples and contains for example sets with boundaries which support the \textit{uniform corkscrew} condition.

The regularity problem for solutions to \eqref{1} has been a long-standing problem in the PDEs theory, related to the Hilbert 19th Problem. In particular, the task to get H\"older continuity of the weak solutions under very general hypotheses on the data is a first step towards developing relevant solvability and regularity theory for \eqref{1} in the framework of various functional scales (see for instance \cite{BP2,BP3,NA} and the references therein). In case when \eqref{1} is the Euler--Lagrange equation of a given functional $\mathcal{F}$ that is the problem of regularity of the minimizers of $\mathcal{F}$ and this links \eqref{1} to important equations from differential geometry or mathematical
physics, such as Ginzburg--Landau, nonlinear Schr\"odinger, non-Newtonian fluids and so on.

The Hilbert 19th Problem has been brilliantly solved by De~Giorgi in \cite{DG} for $W^{1,2}_0$-weak solutions to \textit{linear} differential operators over Lipschitz continuous domains when $m = 2,$ $\varphi\in L^p$ with $p > n$ and $\psi\in L^q$ with $2q > n,$ and this provided the initial breakthrough in the modern theory of quasilinear equations in more than two independent variables. The De~Giorgi result was
extended to \textit{linear} equations in the \textit{non-$L^p$} settings (i.e., when a sort of \eqref{2} holds) by Morrey in \cite{M} and Lewy and Stampacchia in \cite{LS} to equations with
measures at the right-hand side, assuming 
$\nu\in \mathcal{R}^{1,\omega}$ with $\omega > n-2.$
Moving to the quasilinear equation \eqref{1}, we mention the seminal $L^p$-result of
Serrin \cite{Serrin}, which provides \textit{interior} boundedness and H\"older continuity of
the $W^{1,m}_0$-weak solutions to \eqref{1} 
in the \textit{sub-controlled} case when the nonlinearities grow as $|u|^{m-1}+|Du|^{m-1},$ and the behaviour with respect to $x$ of $\ba(x,u,Du)$ and $b(x,u,Du)$   is controlled in terms of $\varphi$ and $\psi,$ respectively, with
\begin{equation}\label{2'}
\begin{array}{lll}
\varphi\in L^{p}(\Omega) &\text{with}\ p>\frac{m}{m-1},\ & (m-1)p>n \\[4pt]
\psi\in L^{q}(\Omega) &\text{with}\ q>\frac{mn}{mn+m-n},\ &  mq>n.
\end{array}
\end{equation}

\textit{Global boundedness} of the $W^{1,m}_0$-weak solutions to \eqref{1} with general nonlinearities of \textit{controlled} growths has been obtained by Ladyzhenskaya and Ural'tseva in \cite{LU1}  under the hypotheses \eqref{2'} and for domains satisfying the \textit{measure density (A)-condition.} 
Assuming \textit{natural growths} of the data $\big($that is, $\ba(x,u,Du)=\mathcal{O}(\varphi(x)+|Du|^{m-1})$ and  $b(x,u,Du)=\mathcal{O}(\psi(x)+|Du|^m)\big)$ and \eqref{2'}, Ladyzhenskaya and Ural'tseva proved later in \cite{LU2} H\"older continuity up to the boundary for the \textit{bounded} weak solutions of \eqref{1}, and Gariepy and Ziemer  extended in \cite{GZ} their result to domains with 
\textit{$m$-thick complements}. 
It was Trudinger \cite{Tr} the first to
get \textit{global} H\"older continuity of the \textit{bounded}  solutions in the \textit{non-$L^p$} settings under the \textit{natural structure hypotheses} of Ladyzhenskaya and Ural'tseva  with
$\varphi\in L^{n/(m-1),\varepsilon},$ $\psi\in L^{n/m,\varepsilon}$ for a small $\varepsilon>0,$  while Lieberman derived in \cite{L} a
very general result on \textit{interior} H\"older continuity when $\varphi$ and $\psi$  are suitable functions and $\nu$ is a suitable measure. We refer the author also to the works by 
Rakotoson \cite{Rk}, Rakotoson and Ziemer \cite{RZ} and Zamboni \cite{Z} for various \textit{interior} regularity results regarding the problem \eqref{1}.

This paper is a natural continuation of \cite{BP1} where boundedness
has been proved for \eqref{1} with Morrey data  in the case $m=2$ under the two-sided (A) condition on  $\partial\Omega.$ Here we derive \textit{global boundedness} (Theorem~\ref{thm1}) and
\textit{H\"older continuity up to the boundary}
(Theorem~\ref{thm2}) for \textit{each} $W^{1,m}_0(\Omega)$-weak solution of the \textit{coercive}  Dirichlet problem \eqref{1} over domains with \textit{$m$-thick} complements assuming \textit{controlled growths} of the nonlinearities, Morrey type data $\varphi$ and $\psi$, and the diffusive measures $\nu$ satisfying \eqref{2} and \eqref{2-1}, respectively.
Apart from the more general class of domains considered, we extend this way the classical \textit{$L^p$-results} of Ladyzhenskaya and Ural'tseva \cite{LU1,LU2,LU} to the \textit{non-$L^p$-settings} by weakening the hypotheses on $\varphi$ and $\psi$ to the scales of \textit{Morrey type} and further considering a diffusive measure data $\nu$.   A comparison between \eqref{2} and \eqref{2'} shows
that the decrease of the degrees $p$ and
$q$ of Lebesgue integrability of the data $\varphi$ and $\psi$ is at the expense of increase of
the Morrey exponents $\lambda$ and $\mu,$ and the range of these variations is always controlled
by the relations $(m-1)p + \lambda > n$ and $mq + \mu > n.$ Indeed, in the particular case $\lambda=\mu=0$ 
and domains with exterior cone property,
our results reduce to these of Ladyzhenskaya
and Ural’tseva \cite{LU1,LU2,LU}. However, our Theorems~\ref{thm1} and \ref{thm2} generalize substantially the results in \cite{LU1,LU2,LU} because even if $(m-1)p\leq n$ and $mq\leq n,$
there exist functions $\varphi \in L^{p,\lambda}$ with
$(m-1)p+\lambda> n$  and $\psi\in L^{q,\mu}$ with $mq+\mu>n$ for which \eqref{2} hold, but  $\varphi\notin L^{p'}$ $\forall p'>n/(m-1)$ and  $\psi\notin L^{q'}$ $\forall q'>n/m$  and therefore \eqref{2'} fail. Moreover, as will be seen in Section~\ref{sec4} below,
the \textit{controlled} growths and the restrictions
\eqref{2} on the Sobolev--Morrey exponents are \textit{optimal} for the global boundedness and the subsequent H\"older continuity of the weak solutions to \eqref{1}.

The paper is organized as follows. 
In Section~\ref{sec2} we start with introducing the concept of $m$-thickness and discuss its relations to the measure density property of $\partial\Omega.$ We   list in a detailed way the hypotheses imposed on the data of \eqref{1} and state the main results of the paper. Section~\ref{sec3} collects various auxiliary results which form the analytic heart of our approach.
Of particular interest here is 
the Gehring--Giaquinta--Modica type Lemma~\ref{lemHIG}
that asserts \textit{better integrability} for the gradient of the weak solution over domains with $m$-thick complements, a particular case of which is due to Kilpel\"ainen and Koskela \cite{KK94}. The proof of the \textit{global boundedness} result 
(Theorem~\ref{thm1}) is given in Section~\ref{sec4}. Our technique relies on the De~Giorgi
 approach to the boundedness  as adapted by Ladyzhenskaya and Ural’tseva (cf. \cite[Chapter~IV]{LU}) to
 quasilinear equations. Namely, using the controlled growth assumptions, we get exact decay estimates for the total mass of the weak solution taken over its level sets. However, unlike the $L^p$-approach of Ladyzhenskaya and Ural’tseva, the
mass we have to do with is taken with respect to a positive Radon measure
$\m,$ which depends not only on the Lebesgue measure, but also on $\varphi^{\frac{m}{m-1}},$ $\psi$ and a
suitable power of the weak solution itself. Thanks to the hypotheses \eqref{2}, the measure $\m$ allows to employ very precise inequalities of trace type due to D.R.~ Adams \cite{Ad} and these lead to a bound of the $\m$-mass of $u$ in terms of the $m$-energy of $u.$ At this point we combine the controlled growth conditions 
with the better integrability of the gradient in order to estimate the $m$-energy of $u$ in terms of small multiplier of the same quantity plus a suitable power of the level set $\m$-measure. The global boundedness of the weak solution then follows by a classical result known as 
\textit{Hartman--Stampacchia maximum principle}. At the end of Section~\ref{sec4} we show sharpness of the
controlled growths hypotheses  as well as of \eqref{2} on the level of explicit examples built on quasilinear operators with $m$-Laplacean principal part. Section~\ref{sec5} is devoted to the proof of the \textit{global H\"older continuity} as claimed in
Theorem~\ref{thm2}. Indeed, the boundedness of the weak solution is guaranteed by Theorem~\ref{thm1} and the fine results obtained by Lieberman in \cite{L} apply to infer \textit{interior H\"older continuity.} To extend it up to the boundary of $\Omega,$ we adopt to our situation the approach of Gariepy and Ziemer from \cite{GZ} which relies on the Moser iteration technique in obtaining growth estimates for the gradient of the solution. The crucial step here is ensured by Lemma~\ref{lemM} which combines with the $m$-thickness condition in order to get estimate for the oscillation of $u$ over small balls centered on $\partial\Omega$ in terms of a suitable positive power of the radius. Just for the sake of simplicity, we proved Theorem~\ref{thm2} under the controlled growths hypotheses. Following the same arguments, it is easy to see that the \textit{global H\"older continuity} result still holds true for the \textit{bounded} weak solutions of \eqref{1} if one assumes the \textit{natural structure} conditions of Ladyzhenskaya and Ural'tseva instead of the controlled ones (cf. Theorem~\ref{thm3}).

\section*{Acknowledgments}

S.-S. Byun was supported by NRF-2017R1A2B2003877.
The work of D.K.~Palagachev was supported by the Italian Ministry of Education, University and Research under the Programme ``Department of Excellence'' L.~232/2016 (Grant No. CUP - D94I18000260001).
P. Shin was supported by NRF-2020R1I1A1A01066850.

\section{Hypotheses and Main Results}\label{sec2}

Throughout the paper, we will use standard notations and will assume that the functions and sets considered are measurable.

We denote by $B_\rho(x)$ (or simply $B_\rho$ if there is no ambiguity) the $n$-dimensional open ball with center $x\in\mr^n$ and radius $\rho.$ The Lebesgue measure of a measurable set $E\subset\mr^n$ will be denoted
by $|E|$ while,  for any integrable function $g$ defined on a set $E$ with $|E|>0$, its integral average is given by
$$
\overline{g}_E:=\Xint-_E g(x)\; dx = \frac{1}{|E|}\int_E g(x)\; dx.
$$

For a vector-valued function $g\colon \Omega \to \mr^n$ defined on a bounded domain $\Omega$, a real number $h \in \mr$ and an integer $i \in \{1,\cdots,n\}$,
we define the difference operator as
$$
\tau_{i,h} g(x) := g(x+he_i) - g(x), \quad \forall x \in \Omega_{|h|}
$$
where $\{e_i\}_{1\leq i \leq n}$ is the standard basis and $\Omega_{|h|}:=\{x\in \Omega : \mathrm{dist}(x,\partial\Omega)>|h|\}$.

We will denote by $C^\infty_0(\Omega)$ the space of infinitely differentiable functions over a bounded domain $\Omega\subset\mr^n$ with compact support contained in that domain, and $L^p(\Omega)$ stands for the standard Lebesgue space with a given $p\in[1,\infty].$ The Sobolev space $W^{1,p}_0(\Omega)$ is defined, as usual, by the completion of $C^\infty_0(\Omega)$ with respect to the norm
$$
\|w\|_{W^{1,p}(\Omega)}:=\|w\|_{L^p(\Omega)}+\|Dw\|_{L^p(\Omega)}
$$
for $p\in [1,\infty).$

Given $s\in[1,\infty)$ and $\theta\in[0,n],$ the Morrey space $L^{s,\theta}(\Omega)$ is the collection of all functions $w\in L^s(\Omega)$ such that 
$$
\|w\|_{L^{s,\theta}(\Omega)}:=\sup_{x_0\in\Omega,\ \rho>0}\left(\rho^{-\theta}\int_{B_\rho(x_0)\cap\Omega}|w(x)|^s\; dx\right)^{1/s} < \infty.
$$
The space $L^{s,\theta}(\Omega),$ equipped with the norm $\|\cdot\|_{L^{s,\theta}(\Omega)}$ is Banach space and 
the limit cases $\theta=0$ and $\theta=n$ give rise, respectively, to $L^s(\Omega)$ and $L^\infty(\Omega).$ 
Moreover, we denote by $\mathcal{R}^{1,\theta}(\Omega)$ to mean the collection of all signed Radon measures $\nu$ with finite mass such that
$$
\|\nu\|_{\mathcal{R}^{1,\theta}(\Omega)}:=\sup_{x_0\in\Omega,\ \rho>0}\left(\frac{|\nu|\left(B_\rho(x_0)\cap \Omega\right)}{\rho^{\theta}}\right)  < \infty.
$$

Given $\alpha\in(0,1]$ and $s\in[1,\infty),$ the fractional Sobolev space $W^{\alpha,s}(\Omega)$ is the collection of all functions $w\in L^s(\Omega)$ such that
\begin{align*}
\|w\|_{W^{\alpha,s}(\Omega)} &:= \left(\int_\Omega |w(x)|^s\; dx\right)^\frac{1}{s} + \left(\int_\Omega\int_\Omega \frac{|w(x)-w(y)|^s}{|x-y|^{n+\alpha s}}\; dxdy\right)^\frac{1}{s} \\
& = \|w\|_{L^s(\Omega)}+[w]_{\alpha,s;\Omega} <\infty,
\end{align*}
while the Nikol'skii space $\mathcal{N}^{\alpha,s}(\Omega)$ is the collection of all measurable functions $w\in L^s(\Omega)$ such that 
$$
\|w\|_{\mathcal{N}^{\alpha,s}(\Omega)} := \left(\int_\Omega |w(x)|^s\; dx\right)^\frac{1}{s} + \left(\sup_{h\neq0}\int_{\Omega_{|h|}} \frac{|\tau_{i,h}w(x)|^s}{|h|^{\alpha s}}\; dx\right)^\frac{1}{s}
$$
	is finite. We further define  $W^{0,s}(\Omega) \equiv \mathcal{N}^{0,s}(\Omega) \equiv L^s(\Omega)$ as the limit case $\alpha = 0$.
It is worth noticing that if $w\in W^{\alpha,s}(\Omega)$ with $\alpha s <n$ and $\Omega$ is a bounded Lipschitz domain, then $w \in L^\frac{ns}{n-\alpha s}(\Omega)$ and 
$$
\|w\|_{ L^\frac{ns}{n-\alpha s}(\Omega)} \leq C(n,\alpha,s,\Omega) \|w\|_{W^{\alpha,s}(\Omega)}.
$$
It is also well known that the following inclusions
$$
W^{\alpha,s}(\Omega) \subset \mathcal{N}^{\alpha,s}(\Omega) \subset W^{\alpha-\varepsilon,s}(\Omega)
$$
hold for every $\varepsilon \in (0,\alpha)$.

Let $\Omega\subset\mr^n$ be a bounded domain with $n\geq2.$ In order to set down the requirements on $\partial\Omega,$ we need to recall the concept of \textit{variational $p$-capacity} of a set for $1<p<\infty.$ Thus, given 
a compact set $K\subset\Omega,$ its $p$-capacity
is defined as
$$
\mathrm{Cap}_p(K,\Omega)=\inf_g\int_{\Omega}|Dw|^p\; dx
$$
where the infimum is taken over all functions 
$w \in C^\infty_0(\Omega)$ such that $w=1$ in $K.$ 

It is worth noticing that if $K\subset K'\subset \Omega'\subset\Omega$ then
$$
\mathrm{Cap}_p(K,\Omega)\leq \mathrm{Cap}_p(K',\Omega')
$$
and, in case of two concentric balls $B_R$ and $B_r$ with $R>\rho,$ the next formula 
$$
\mathrm{Cap}_p(\overline B_\rho,B_R)=C \rho^{n-p}
$$
is known for $p>1,$ where $C>0$ depends on $n,$ $p$ and $R/\rho$ (see \cite[Chapter~2]{HKM} for more details).

\smallskip

In the sequel we will suppose that the 
complement $\mr^n\setminus\Omega$ of $\Omega$ satisfies the next \textit{uniform $m$-thickness condition: there exist positive constants $A_\Omega$ and $r_0$ such that
\begin{equation}\label{3}
\mathrm{Cap}_m\big(\overline{B}_\rho(x)\setminus\Omega,B_{2\rho}(x)\big)\geq A_\Omega\ \mathrm{Cap}_m\big(\overline{B}_\rho(x),B_{2\rho}(x)\big)
\end{equation}
for all $x\in \mr^n\setminus\Omega$ and all $\rho\in(0,r_0).$}

\smallskip

Let us point out that replacing the capacity above with the Lebesgue measure, \eqref{3} reduces to the measure density condition (the \textit{(A)-condition} of Ladyzhenskaya and Ural'tseva)
which holds for instance when $\Omega$ supports the \textit{uniform exterior cone property.} If a given set $E$ satisfies the measure density condition then it is uniformly $p$-thick for each $p>1,$ whereas each nonempty set is uniformly $p$-thick if $p>n.$ Further on, a uniformly $q$-thick set is also uniformly $p$-thick for all $p\geq q$ and, as proved in \cite{L88}, the uniformly $p$-thick sets have a deep \textit{self-improving property} to be uniformly $q$-thick for \textit{some} $q<p,$ depending on $n,$ $p$ and the constant of the $p$-thickness. This way, it follows from \eqref{3} and 
\cite{L88} the existence of a number $P<m$ such that $\mr^n\setminus\Omega$ is uniformly $s$-thick
for each $s\in[P,m].$ Yet another example of uniformly $p$-thick sets for all $p>1$ is given by those satisfying the uniform \textit{corkscrew} condition: a set $E$ is uniformly corkscrew if there exist constants $C>0$ and $r_0>0$ such that for any $x\in E$ and any $\rho\in(0,r_0)$ there is a point $y\in B_\rho(x)\setminus E$ with the property that $B_{\rho/C}(y)\subset \mr^n\setminus E.$

\medskip

Turning back to the Dirichlet problem \eqref{1}, the nonlinearities considered are given by the Carath\'eodory maps $\ba\colon\Omega\times\mr\times\mr^n\to\mr^n$ and  $b\colon\Omega\times\mr\times\mr^n\to\mr,$ where
$\ba(x,z,\xi)=\big(a^1(x,z,\xi),\cdots,a^n(x,z,\xi)\big).$  In other words,
 the functions $a^i(x,z,\xi)$ and $b(x,z,\xi)$ are measurable with respect to $x\in\Omega$ for all $(z,\xi)\in\mr\times\mr^n$ and are continuous with respect to $z\in\mr$ and $\xi\in\mr^n$ for almost all (a.a.) $x\in\Omega.$ Moreover, we suppose:

\smallskip

$\bullet$ \textit{Controlled growth conditions:} There exist a constant $\Lambda>0$ and non-negative functions $\varphi\in L^{p,\lambda}(\Omega)$ with $p>\frac{m}{m-1},$ $\lambda\in(0,n)$ and $(m-1)p+\lambda>n,$ and $\psi\in L^{q,\mu}(\Omega)$ with $q\geq 1$, $\mu\in(0,n)$ and $mq+\mu>n,$ such that 
\begin{equation}\label{4}
\begin{cases}
|\ba(x,z,\xi)| \leq \Lambda\left(\varphi(x)+|z|^{\frac{{m^*}(m-1)}{m}}+|\xi|^{m-1}\right),\\[4pt]
|b(x,z,\xi)| \leq \Lambda\left(\psi(x)+|z|^{{m^*}-1}+|\xi|^{\frac{m({m^*}-1)}{{m^*}}}\right)
\end{cases} 
\end{equation}
for a.a. $x\in\Omega$ and all $(z,\xi)\in\mr\times\mr^n.$ Here, ${m^*}$ is the Sobolev conjugate of $m$ and is given by 
$$
{m^*} =
\begin{cases}
\frac{nm}{n-m} & \text{if}\ m<n,\\
\text{any exponent} \ {m^*}>n & \text{if}\ m=n.
\end{cases}
$$

$\bullet$ \textit{Coercivity condition:} There exists a constant $\gamma>0$ such that
\begin{equation}\label{5}
\ba(x,z,\xi)\cdot\xi\geq\gamma|\xi|^m-\Lambda|z|^{{m^*}}-\Lambda\varphi(x)^\frac{m}{m-1}
\end{equation}
for a.a. $x\in\Omega$ and all $(z,\xi)\in\mr\times\mr^n.$

\smallskip

Recall that a function $u\in W^{1,m}_0(\Omega)$ is called a \textit{weak solution} of the Dirichlet problem \eqref{1} 
if
\begin{equation}\label{6}
\int_\Omega \ba(x,u,Du)\cdot Dv\; dx + \int_\Omega b(x,u,Du)v\; dx + \int_\Omega v\; d\nu = 0
\end{equation}
for each test function $v\in W^{1,m}_0(\Omega).$ It is worth noting that
the convergence of the integrals involved in \eqref{6} for all admissible  $u$ and $v$ is ensured by \eqref{4} under the  assumptions \eqref{2} and \eqref{2-1}. In particular, the Morrey space $L^{q,\mu}(\Omega)$ and $\mathcal{R}^{1,\omega}(\Omega)$ are contained in the dual space $W^{-1,m'}(\Omega)$ of the Sobolev space $W^{1,m}_0(\Omega)$.

Throughout the paper the omnibus phrase ``\textit{known quantities}'' means that a given constant depends on the data in hypotheses \eqref{3}, \eqref{4}, \eqref{5}, \eqref{2} and \eqref{2-1} which include  $n,$ $m,$ 
${m^*},$ $\gamma,$ $\Lambda,$ $p,$ $q,$ $\lambda,$ $\mu,$ $\omega,$ $\|\varphi\|_{L^{p,\lambda}(\Omega)},$ $\|\psi\|_{L^{q,\mu}(\Omega)},$ $\|\nu\|_{\mathcal{R}^{1,\omega}(\Omega)},$  $\mathrm{diam}\,\Omega,$ $A_\Omega$ and $r_0.$ We will denote by $C$ a generic constant, depending on known quantities, which may vary within the same formula.

\medskip

Our first result claims \textit{global essential boundedness} of the weak solutions to the problem \eqref{1}.
\begin{theorem}\label{thm1}
Let $\Omega$ satisfy \eqref{3} and assume \eqref{4}, \eqref{5}, \eqref{2} and \eqref{2-1}. Then each $W^{1,m}_0(\Omega)$-weak solution to the problem \eqref{1} is globally essentially bounded. That is, there exists a constant M, depending on known quantities, on  $\|Du\|_{L^m(\Omega)}$ and on the uniform integrability of $|Du|^m,$ such that
\begin{equation}\label{10}
\|u\|_{L^\infty(\Omega)}\leq M.
\end{equation} 
\end{theorem}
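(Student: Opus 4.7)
The plan is to follow the De Giorgi / Ladyzhenskaya--Ural'tseva scheme for level sets, but with the Lebesgue measure replaced by a Radon measure $\m$ that encodes the Morrey data, and to close the iteration via Adams' trace inequality and the higher integrability asserted in Lemma~\ref{lemHIG}. Set $A(k)=\{x\in\Omega:u(x)>k\}$ for $k\geq 0$ (and analogously for the negative part) and test \eqref{6} with $v=(u-k)^+ \in W^{1,m}_0(\Omega)$. By the coercivity assumption \eqref{5} and the controlled growth bounds \eqref{4},
\begin{equation*}
\gamma\int_{A(k)}|Du|^{m}\,dx \leq \Lambda\int_{A(k)}\!\big(|u|^{\ell}+\varphi^{\frac{m}{m-1}}\big)dx
+\Lambda\int_{A(k)}\!\big(\psi+|u|^{\ell-1}+|Du|^{\frac{m(\ell-1)}{\ell}}\big)(u-k)^+\,dx.
\end{equation*}
Define on $\Omega$ the positive Radon measure
$$
d\m := \big(1+\varphi^{\frac{m}{m-1}}+\psi+|u|^{\ell}\big)\,dx,
$$
so that the right-hand side above is controlled by $\m(A(k))$ plus a gradient term that will be absorbed.

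The heart of the argument is to show that $\m$ satisfies a Morrey-type bound of the form $\m(B_r)\leq C r^{n-m+\varepsilon}$ for some $\varepsilon>0$. The contributions of $\varphi^{m/(m-1)}$ and $\psi$ give exactly such a bound thanks to the hypotheses \eqref{2}, which were tailored to make $(m-1)p+\lambda>n$ and $mq+\mu>n$; for the term $|u|^\ell$ one uses Sobolev embedding together with the higher integrability of $|Du|$ granted by Lemma~\ref{lemHIG} to upgrade $u\in L^\ell$ to membership in a slightly better Morrey class. Once $\m$ has the right dimensional scaling, the Adams trace inequality yields, for every $w\in W^{1,m}_0(\Omega)$,
$$
\int_\Omega |w|^{\sigma}\,d\m \leq C\,\|Dw\|_{L^m(\Omega)}^{\sigma}
$$
with some $\sigma>m$. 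Applying this to $w=(u-k)^+$ we estimate the nonlinear right-hand sides (the terms in $|u|^{\ell-1}$ and the gradient term via Young's inequality) by $\m(A(k))$ to a power strictly greater than one.

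Combining these two reductions yields the master inequality
$$
\int_{A(k)}|Du|^{m}\,dx \leq \tfrac{1}{2}\int_{A(k)}|Du|^{m}\,dx + C\,\m(A(k))^{1+\delta}
$$
for some $\delta>0$, after which the gradient term is absorbed on the left. Feeding this estimate back into Sobolev and Adams on $(u-k)^+$ produces a nonlinear recursive inequality
$$
\m(A(h))\leq \frac{C\,\m(A(k))^{1+\delta}}{(h-k)^{\tau}},\qquad 0\leq k<h,
$$
with $\tau>0$. The \textbf{Hartman--Stampacchia} (or De Giorgi) lemma then gives a level $k_0$, depending on known quantities and on $\|Du\|_{L^m(\Omega)}$ together with the uniform integrability of $|Du|^m$ (which is precisely what is needed to choose the starting $k$ so that $\m(A(k_0))$ is small), for which $\m(A(k_0+d))=0$ with $d$ finite. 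This bounds $u$ from above, and symmetric reasoning with $(u+k)^-$ bounds $u$ from below, yielding \eqref{10}.

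The main obstacle I anticipate is the self-referential appearance of $|u|^\ell$ in the measure $\m$: producing a \emph{uniform} Morrey bound for $\m$ requires that $u$ already sit in a slightly better-than-critical space, which is why Lemma~\ref{lemHIG} (the Gehring--Giaquinta--Modica lemma adapted to $P$-thick complements) is essential. The dependence of $M$ on the uniform integrability of $|Du|^m$ enters at exactly this point, since it controls the starting value $\m(A(k_0))$ of the iteration and guarantees that the exponent gain $\delta$ can be made positive under the critical conditions $(m-1)p+\lambda>n$ and $mq+\mu>n$.
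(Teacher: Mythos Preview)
Your overall architecture---a De~Giorgi level-set scheme with the Lebesgue measure replaced by a Radon measure $\m$ built from the Morrey data and $u$ itself, closed via Adams' trace inequality and the higher integrability of Lemma~\ref{lemHIG}---is exactly the paper's strategy. However, there is a genuine gap in your choice of measure.

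You put $|u|^{\ell}$ into $d\m$ and then claim that Lemma~\ref{lemHIG} upgrades $|u|^{\ell}$ to a Morrey class $L^{1,n-m+\varepsilon}$. This step fails. From $u\in L^{\ell_0}(\Omega)$ and H\"older one gets
\[
\int_{B_\rho}|u|^{\ell}\,dx \;\leq\; \|u\|_{L^{\ell_0}}^{\ell}\,\rho^{\,n(1-\ell/\ell_0)},
\]
and $n(1-\ell/\ell_0)>n-m$ is equivalent to $\ell_0>\tfrac{n\ell}{m}=\tfrac{n^2}{n-m}$. The Gehring--Giaquinta--Modica lemma only yields \emph{some} $\ell_0>\ell=\tfrac{nm}{n-m}$, which is strictly weaker; there is no reason for $\ell_0$ to exceed $\tfrac{n^2}{n-m}$. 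Consequently your measure need not satisfy $\m(B_\rho)\leq K\rho^{n-m+\varepsilon}$, Adams' inequality is unavailable, and the whole iteration collapses.

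The paper fixes this by placing the \emph{smaller} power $|u|^{\frac{m^2}{n-m}}=|u|^{\ell-m}$ in $d\m$; for this exponent the Morrey bound follows from any $\ell_0>\ell$. The price is that $\int_{\Omega_k}|u|^{\ell}\,dx$ is no longer directly dominated by $\m(\Omega_k)$: one must split $|u|^{\ell}=|u|^{m}\cdot|u|^{\ell-m}=|u-k+k|^{m}\cdot|u|^{\ell-m}$, which produces an extra factor $k^{m}$ and, after Adams with $r'<m$, a term $|\Omega_k|^{m/r'-1}\int_{\Omega_k}|Du|^m$ that is absorbed only for $k\geq k_0$ large. The resulting energy bound is $\int_{\Omega_k}|Du|^m\leq Ck^{m}\m(\Omega_k)$, not your ``master inequality'' with $\m(\Omega_k)^{1+\delta}$ on the right. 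Plugging this into Adams for $v=(u-k)^+$ and invoking Cavalieri's principle gives $\int_k^\infty \m(\Omega_t)\,dt\leq Ck\,\m(\Omega_k)^{1+\delta}$, to which the Hartman--Stampacchia maximum principle (Proposition~\ref{HSMp}) applies directly; the paper does not pass through a De~Giorgi recursion of the form $\m(A(h))\leq C\m(A(k))^{1+\delta}/(h-k)^{\tau}$.
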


An immediate consequence of Theorem~\ref{thm1} and the local properties of solutions to quasilinear elliptic equations (cf. \cite{L,Z}) is the \textit{interior H\"older continuity} of the weak solutions.
\begin{corollary}\label{crl1}
Under the hypotheses of Theorem~\ref{thm1}, each weak solution to \eqref{1} is {\em locally} H\"older continuous in $\Omega.$ That is, 
$$
\sup_{x,y\in \Omega',\ x\neq y} \frac{|u(x)-u(y)|}{|x-y|^\alpha} \leq H\quad \forall \Omega'\Subset\Omega
$$
with an exponent $\alpha\in(0,1)$ and a constant $H>0$ depending on the same quantities as $M$ in \eqref{10} and on $\mathrm{dist\,}(\Omega',\partial\Omega)$ in addition. 
\end{corollary}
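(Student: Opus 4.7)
The proof plan is a direct reduction of the corollary to known interior regularity results, with Theorem~\ref{thm1} supplying the indispensable global bound. The first step is to invoke Theorem~\ref{thm1} to secure $\|u\|_{L^\infty(\Omega)} \leq M$ for the weak solution $u$, with $M$ a known constant. With $u$ essentially bounded, the $z$-dependence in~\eqref{4} becomes harmless: estimating $|u|^{\ell(m-1)/m} \leq M^{\ell(m-1)/m}$ and $|u|^{\ell-1} \leq M^{\ell-1}$, the controlled growths collapse to
$$
|\ba(x,u,Du)| \leq \tilde\varphi(x) + \Lambda|Du|^{m-1}, \qquad |b(x,u,Du)| \leq \tilde\psi(x) + \Lambda|Du|^{m(\ell-1)/\ell},
$$
where $\tilde\varphi := \Lambda\varphi + \Lambda M^{\ell(m-1)/m} \in L^{p,\lambda}(\Omega)$ and $\tilde\psi := \Lambda\psi + \Lambda M^{\ell-1} \in L^{q,\mu}(\Omega)$ still satisfy the Sobolev--Morrey conditions $(m-1)p + \lambda > n$ and $mq + \mu > n$. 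In the same way, the coercivity~\eqref{5} reduces to $\ba(x,u,Du)\cdot Du \geq \gamma|Du|^m - \Lambda\varphi(x)^{m/(m-1)} - \Lambda M^\ell$.

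The second step is to observe that the resulting equation for $u$ is of the type covered by the interior H\"older continuity theorems of Lieberman~\cite{L} and Zamboni~\cite{Z} for \emph{bounded} weak solutions of quasilinear equations whose coefficients generate measures in suitable Kato--Morrey classes. The relevant smallness required there is precisely what the strict inequalities in~\eqref{2} deliver: since $(m-1)p + \lambda > n$ and $mq + \mu > n$, the quantities $\tilde\varphi^{m/(m-1)}$ and $\tilde\psi$ give rise to Morrey norms on balls $B_\rho$ that decay like a positive power of $\rho$, which is exactly the Dini--Kato-type control exploited in~\cite{L,Z}.

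The final step is to apply the cited interior theorems on an arbitrary $\Omega' \Subset \Omega$. This produces an exponent $\alpha \in (0,1)$ and a constant $H > 0$ whose dependence on the known quantities, on the parameters governing $M$, and on $\mathrm{dist}(\Omega',\partial\Omega)$ (through the localization cutoffs used in the interior estimates) is exactly the one claimed. The only point demanding care is the bookkeeping needed to match our Sobolev--Morrey framework~\eqref{2} with the hypotheses of~\cite{L,Z}; but because~\eqref{2} is formulated with \emph{strict} inequalities and the $|u|$-dependence has already been eliminated, this matching is essentially automatic and constitutes no genuine obstacle.
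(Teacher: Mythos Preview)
Your proposal is correct and follows essentially the same route as the paper: invoke Theorem~\ref{thm1} for the bound $\|u\|_{L^\infty}\leq M$, absorb the $|u|$-powers into the data $\varphi,\psi$, verify that the strict inequalities $(m-1)p+\lambda>n$ and $mq+\mu>n$ yield the required Morrey decay on balls, and then cite Lieberman~\cite{L} (the paper also cites \cite{Tr}) for the interior H\"older estimate. The only notable difference is that the paper disposes of the borderline case $m=n$ separately and more cheaply, using Lemma~\ref{lemHIG} to get $Du\in L^{m_0}$ with $m_0>n$ and then the classical Morrey embedding, rather than routing that case through \cite{L,Z} as you implicitly do.
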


What really turns out is that assumptions \eqref{3}, \eqref{4}, \eqref{5}, \eqref{2} and \eqref{2-1} are also sufficient to ensure H\"older continuity of the weak solutions \textit{up to the boundary}, and this is the essence of our second main result.
\begin{theorem}\label{thm2}
Assume \eqref{3}, \eqref{4}, \eqref{5}, \eqref{2} and \eqref{2-1}. Then each weak solution of the Dirichlet problem \eqref{1} is {\em globally} H\"older continuous in $\Omega.$ Precisely, 
$$
\sup_{x,y\in \overline\Omega,\ x\neq y} \frac{|u(x)-u(y)|}{|x-y|^\alpha} \leq H,
$$
where the exponent $\alpha\in(0,1)$ and the H\"older constant $H>0$ depend on the same quantities as $M$ in \eqref{10}. 
\end{theorem}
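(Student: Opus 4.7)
The plan is to leverage Theorem~\ref{thm1} together with the interior regularity of Corollary~\ref{crl1}, and reduce the task to proving H\"older continuity at the boundary. Once global essential boundedness is established, the Lieberman-type interior theory gives a uniform H\"older exponent and constant on every subdomain $\Omega' \Subset \Omega$. What remains, and is the delicate part, is to control the oscillation of $u$ on balls $B_r(x_0)\cap\Omega$ centered at points $x_0 \in \partial\Omega$ for small $r$, and ultimately to prove
$$
\osc_{B_r(x_0)\cap\Omega} u \leq C r^\alpha
$$
for some $\alpha \in (0,1)$ depending only on known quantities.

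Fix $x_0 \in \partial\Omega$ and $r \in (0,r_0)$. I would work with the truncations $(u-k)_\pm$ at levels $k$ between the essential infimum and supremum of $u$ over $B_r(x_0)\cap\Omega$, extended by zero to all of $\mr^n$ by means of the Dirichlet boundary condition. Testing the weak formulation \eqref{6} with localized versions of these truncations against a suitable cut-off, and combining the coercivity \eqref{5} with the controlled growths \eqref{4}, yields a Caccioppoli-type estimate whose right-hand side carries, in addition to the standard energy term, Morrey-controlled contributions from $\varphi$, $\psi$, and from subcritical powers of $u$ and $|Du|$. To absorb these correctly I would invoke the Adams trace-type inequalities already used in Section~\ref{sec4}, which trade Morrey integrability for weighted $L^m$-control of the truncation.

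Next I would feed this Caccioppoli inequality into a Moser-type iteration in the spirit of Gariepy--Ziemer \cite{GZ}, to extract an $L^\infty$ estimate for one of the truncations over concentric balls. At this point the $P$-thickness hypothesis \eqref{3} enters decisively: since $u$ vanishes on $\mr^n\setminus\Omega$, the coincidence set $\{(u-k)_\pm = 0\}\cap B_r(x_0)$ has positive $P$-capacity, and the associated capacitary Sobolev--Poincar\'e inequality upgrades the Moser sup-bound into a genuine oscillation reduction. Coupling this with the higher integrability of $|Du|$ furnished by Lemma~\ref{lemHIG} and the growth estimate of Lemma~\ref{lemM}, one obtains a discrete decay of the type
$$
\osc_{B_{\tau r}(x_0)\cap\Omega} u \leq \theta\, \osc_{B_r(x_0)\cap\Omega} u + C r^\beta,
$$
for some $\theta \in (0,1)$, $\beta > 0$ and a small $\tau \in (0,1)$, whose standard iteration delivers the desired boundary H\"older bound.

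The global H\"older estimate then follows by a familiar patching: given $x,y\in\overline{\Omega}$ one compares $|x-y|$ with $\mathrm{dist}(x,\partial\Omega)$ and invokes either Corollary~\ref{crl1} directly or the boundary decay above via a nearest-boundary-point argument. The principal obstacle, in my view, is the closure of the Caccioppoli step: the Morrey data $\varphi\in L^{p,\lambda}$, $\psi\in L^{q,\mu}$ and the critically scaling nonlinear terms $|u|^{\ell(m-1)/m}$ and $|Du|^{m(\ell-1)/\ell}$ all contribute at the borderline, so that balancing them demands the simultaneous use of Lemma~\ref{lemHIG} and the Adams trace inequalities under the sharp constraints $(m-1)p+\lambda > n$ and $mq+\mu > n$ from \eqref{2}.
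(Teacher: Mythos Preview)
Your outline is correct and matches the paper's strategy: interior H\"older continuity from Corollary~\ref{crl1}, boundary oscillation decay via Lemma~\ref{lemM} combined with the $P$-thickness capacity bound, then Proposition~\ref{Pinterpolation} and a final patching argument. Note, however, that the truncation-plus-Moser machinery you sketch is already packaged inside Lemma~\ref{lemM}: once that lemma is in hand, the paper obtains the decay $M(\rho/2)\leq\frac{C-1}{C}\big(M(\rho)+\overline{A}(\rho)\big)$ directly by using $\eta w^{-1}/(M(\rho)+\overline{A}(\rho))$ as an admissible function for $\mathrm{Cap}_m$ of the zero set of $\overline u$ and invoking~\eqref{3}, with no separate De~Giorgi truncations, no further Adams-trace step, and no additional appeal to Lemma~\ref{lemHIG} needed at the boundary.
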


\section{Auxiliary Results}\label{sec3}

For the sake of completeness, we collect here some auxiliary results to be used in proving Theorems~\ref{thm1} and \ref{thm2}.

\subsection{Basic tools}

\begin{proposition}\label{MSa} 
{\em (Embeddings between Morrey spaces, see \cite{Pic})}
For arbitrary $s',s''\in [1,\infty)$ and $\theta',\theta'' \in [0,n),$ one has 
$$
L^{s',\theta'}(\Omega)\subseteq L^{s'',\theta''}(\Omega)
$$
if and only if
$$
s'\geq s''\geq 1\quad \text{and} \quad \frac{s'}{n-\theta'}\geq\frac{s''}{n-\theta''}.
$$
Moreover, if $n>\theta' \geq \theta''\geq 0$, then it holds
$$
\mathcal{R}^{1,\theta'}(\Omega) \subseteq \mathcal{R}^{1,\theta''}(\Omega) . 
$$
\end{proposition}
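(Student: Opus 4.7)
My proof of Proposition~\ref{MSa} splits into the sufficiency and the necessity of the two conditions.

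\textit{Sufficiency.} Assume $s'\geq s''\geq 1$ and $\tfrac{s'}{n-\theta'}\geq\tfrac{s''}{n-\theta''}$, and let $u\in L^{s',\theta'}(\Omega)$. I would apply H\"older's inequality with conjugate exponents $s'/s''$ and $s'/(s'-s'')$ on each truncated ball $B_\rho(x_0)\cap\Omega$ to obtain
\[
\int_{B_\rho(x_0)\cap\Omega}|u|^{s''}\,dx \leq \left(\int_{B_\rho(x_0)\cap\Omega}|u|^{s'}\,dx\right)^{s''/s'}|B_\rho(x_0)|^{1-s''/s'}.
\]
Inserting the Morrey control $\int_{B_\rho\cap\Omega}|u|^{s'}\,dx\leq \rho^{\theta'}\|u\|_{L^{s',\theta'}}^{s'}$ together with $|B_\rho|\leq c\rho^n$, and multiplying by $\rho^{-\theta''}$, yields on the right a factor $\rho^{n-\theta''-(n-\theta')s''/s'}\|u\|_{L^{s',\theta'}}^{s''}$. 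The second hypothesis is exactly the statement that this exponent of $\rho$ is nonnegative, so the bound is uniform in $\rho\in(0,\operatorname{diam}\Omega]$ and $x_0\in\Omega$, giving $\|u\|_{L^{s'',\theta''}}\leq C\|u\|_{L^{s',\theta'}}$.

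\textit{Necessity of the dimension condition.} Assume the inclusion holds; by the closed graph theorem it is automatically a bounded embedding. I would test it against the power singularities $u_\alpha(x)=|x-x_0|^{-\alpha}$ at a fixed interior point $x_0\in\Omega$. A direct calculation on small concentric balls gives $u_\alpha\in L^{s,\theta}(\Omega)$ if and only if $\alpha\leq (n-\theta)/s$. If the dimension condition failed strictly, picking $\alpha$ in the nonempty open interval $\bigl((n-\theta'')/s'',(n-\theta')/s'\bigr)$ would produce $u_\alpha\in L^{s',\theta'}\setminus L^{s'',\theta''}$, contradicting the inclusion.

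\textit{Necessity of $s'\geq s''$.} This is the more delicate step because, under the dimension condition, both a single power singularity and a single spike $u=N\chi_{B_r(y)}$ (for which $\|u\|_{L^{s,\theta}}\sim Nr^{(n-\theta)/s}$) have the same relative scaling in both target spaces. The remedy is to assemble a countable sum of well-separated spikes $u=\sum_k N_k\chi_{B_{r_k}(x_k)}$, exploiting the fact that the Morrey norm sees only the worst single-ball contribution while Lebesgue integrals are genuinely additive. In the boundary case $\theta''=0$ I would take $r_k=2^{-k}$ and $N_k=r_k^{-(n-\theta')/s'}$: then $\|u\|_{L^{s',\theta'}}$ stays finite whereas $\int_\Omega|u|^{s''}\,dx=\sum_k r_k^{n-s''(n-\theta')/s'}=\infty$, and the trivial inclusion $L^{s'',\theta''}(\Omega)\hookrightarrow L^{s''}(\Omega)$ on a bounded domain finishes the job. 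For general $\theta''>0$ the spikes must be distributed at many scales on a suitable self-similar support, so that even when $\int|u|^{s''}$ is finite the local supremum $\sup_\rho\rho^{-\theta''}\int_{B_\rho}|u|^{s''}\,dx$ still diverges. This multi-scale calibration is the principal obstacle of the proof: the power-function test pins down the scaling exponent $(n-\theta)/s$ but is blind to $s$ itself, so distinguishing the two Lebesgue exponents requires Morrey mass assembled from infinitely many small-scale pieces rather than concentrated on any single ball.
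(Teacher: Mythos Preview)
The paper does not prove Proposition~\ref{MSa}; it is simply quoted from Piccinini \cite{Pic}. So there is no ``paper's own proof'' to compare against, and your argument has to stand on its own.

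Your sufficiency argument and the necessity of the scaling condition $\frac{s'}{n-\theta'}\geq\frac{s''}{n-\theta''}$ via the power test functions are both correct and standard.

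There is, however, a genuine gap in your treatment of the necessity of $s'\geq s''$ in what you call the ``boundary case $\theta''=0$''. You claim that with $r_k=2^{-k}$ and $N_k=r_k^{-(n-\theta')/s'}$ one has
\[
\int_\Omega |u|^{s''}\,dx \;=\; c\sum_k r_k^{\,n-\frac{s''(n-\theta')}{s'}}=\infty.
\]
But at this stage of the argument you are assuming (for contradiction) that the embedding holds, hence the scaling condition is already in force: $\frac{s'}{n-\theta'}\geq \frac{s''}{n}$, equivalently $\frac{s''(n-\theta')}{s'}\leq n$. The exponent in your series is therefore \emph{nonnegative}, and the geometric sum converges unless the scaling condition holds with equality. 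Your single-scale spike construction only produces a counterexample on that borderline; for the generic strict inequality it gives a function in $L^{s',\theta'}\cap L^{s''}$ and yields no contradiction.

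What actually works is precisely the multi-scale scheme you sketch for $\theta''>0$, and it is already needed when $\theta''=0$. One has to pack, for each $j$, a cluster of $M_j\sim (R_j/r_j)^{\theta'}$ disjoint spikes of radius $r_j$ and height $r_j^{-(n-\theta')/s'}$ inside a ball of radius $R_j$, with $R_j=2^{-j}$, $r_j=R_j^{a}$ and $a$ large. The cluster constraint $M_j N_j^{s'} r_j^n\leq C R_j^{\theta'}$ keeps $u\in L^{s',\theta'}$, while $\sum_j M_j N_j^{s''} r_j^n$ diverges as soon as $s''>s'$ and $a$ is taken sufficiently large (the relevant threshold is $\theta'(1-1/a)\geq n-\frac{s''(n-\theta')}{s'}$, which is achievable because the right-hand side is strictly less than $\theta'$ whenever $s''>s'$). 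So your diagnosis that ``distinguishing the two Lebesgue exponents requires Morrey mass assembled from infinitely many small-scale pieces'' is exactly right; you just invoked it one case too late.
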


\begin{proposition}\label{Ati}
{\em ({Adams trace inequality}, 
see \cite{Ad})}
Let $\m$ be a positive Radon measure supported in $\Omega$ such that $\m\in \mathcal{R}^{1,\alpha_0}(\Omega)$  and let
$$
\alpha_0=\frac{s}{r}(n-r),\quad 1<r<s<\infty,\quad r<n.
$$
Then
$$
\left(\int_\Omega|v(x)|^s\; d\m\right)^{1/s}\leq C(n,s,r)\|\m\|^{1/s}_{\mathcal{R}^{1,\alpha_0}(\Omega)}\left(\int_\Omega|Dv(x)|^r\; dx\right)^{1/r}
$$ 
for all $v\in W^{1,r}_0(\Omega)$.
\end{proposition}

\begin{proposition}\label{lemEMB}{\em (see \cite[Chapter~VII]{RAAd}, \cite{KM})}
Let $\Omega' \subset\subset \Omega$ and $f\in L^s(\Omega)$ with $s\in [1,\infty).$ Suppose there exist constants $\tilde{\alpha}\in(0,1]$ and $S\geq 0$ such that
$$
\|\tau_{i,h} w\|_{L^s(\Omega')}\leq S|h|^{\tilde{\alpha}}
$$
for every $1\leq i \leq n$ and $h \in \mr$ with $0<|h|\leq d,$ where $0<d\leq \mathrm{dist}(\Omega',\partial\Omega).$
 
Then $f\in W^{\alpha,s}(\Omega')$ for every $\alpha\in (0,\tilde{\alpha})$ and
$$
\|f\|_{ W^{\alpha,s}(\Omega')} \leq C
$$
with a constant $C$ depending only on $n,$ $s,$ $d,$ $S,$ $\tilde{\alpha}-\alpha$ and $\|f\|_{L^s(\Omega')}.$
\end{proposition}

\begin{proposition}\label{GGMl}
{\em ({Gehring--Giaquinta--Modica lemma}, 
see \cite[Proposition~1.1, Chapter~V]{G})}
Let $B$ be a fixed ball and
$G\in L^s(B),$ $F\in L^{s_0}(B)$ be nonnegative functions with $s_0>s>1.$  
Suppose 
$$
\Xint-_{B_\rho}G^s(x)\; dx \leq c\left(\Xint-_{B_{2\rho}}G(x)\; dx\right)^s+\Xint-_{B_{2\rho}}F^s(x)\; dx +\theta\Xint-_{B_{2\rho}}G^s(x)\; dx
$$
for each ball $B_\rho$ of radius $\rho\in(0,\rho_0)$ such that $B_{2\rho}\subset B,$ where $0 \leq \theta < 1.$ 

Then there exist constants $C$ and $m_0\in(s,s_0],$ depending on $n,$ $c,$ $s,$ $s_0$ and $\theta,$ such that
$$
\left(\Xint-_{B_\rho}G^{m_0}(x)\; dx\right)^{1/m_0} 
\leq C\left(\left(\Xint-_{B_{2\rho}}G^s(x)\; dx\right)^{1/s}+\left(\Xint-_{B_{2\rho}}F^{s_0}(x)\; dx\right)^{1/s_0}\right).
$$
\end{proposition}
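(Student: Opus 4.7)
The plan is to upgrade the reverse-Hölder type hypothesis to a good-$\lambda$ inequality via a Calderón--Zygmund stopping-time construction, and then extract the higher integrability through the layer-cake formula. By rescaling we may assume $B$ is a fixed ball; extending $F$ and $G$ by zero outside $B$ then makes the level sets easy to manipulate on all of $\mr^n$.

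Fix a threshold $t_0$ comparable to $\bigl(\Xint-_B G^s\, dx\bigr)^{1/s}$. For each $t \geq t_0$, apply a Vitali-type stopping-time decomposition to $G^s$ at level $t^s$: this produces a disjoint family of balls $\{B_{\rho_j}\}\subset B$ with $B_{2\rho_j} \subset B$ and $\rho_j < \rho_0$, on each of which $t^s \leq \Xint-_{B_{\rho_j}} G^s\, dx \leq c(n)\, t^s$, and with $G \leq t$ a.e.\ outside their union. Applying the hypothesis on each $B_{\rho_j}$, then splitting the average $\Xint-_{B_{2\rho_j}} G\, dx$ according to whether $G$ exceeds a small threshold $\eta t$, and summing in $j$, yields a good-$\lambda$ inequality of the form
$$
\int_{\{G > \Lambda t\}} G^s\, dx \leq c(\theta + \eta^s + \Lambda^{-s}) \int_{\{G > \eta t\}} G^s\, dx + c \int_{\{F > \eta t\}} F^s\, dx \quad \forall\, t \geq t_0,
$$
with parameters $\eta \in (0,1)$ small and $\Lambda > 1$ large to be chosen.

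Multiplying by $t^{\epsilon-1}$ with $\epsilon > 0$ small and integrating $t$ over $(t_0, \infty)$ converts the good-$\lambda$ estimate, via the layer-cake formula, into
$$
\int_B G^{s+\epsilon}\, dx \leq c\, \Lambda^\epsilon \eta^{-\epsilon}(\theta + \eta^s + \Lambda^{-s}) \int_B G^{s+\epsilon}\, dx + c \int_B F^{s+\epsilon}\, dx + c\, t_0^\epsilon \int_B G^s\, dx.
$$
Choosing first $\eta$ small, then $\Lambda$ large, and finally $\epsilon$ small, the prefactor can be driven strictly below $1$, so the $G^{s+\epsilon}$ term can be absorbed on the left. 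Setting $m_0 := s + \epsilon$ and dividing by $|B|$ then gives the asserted reverse-Hölder estimate on integral averages.

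The main obstacle is the delicate ordering of the three parameters $\eta$, $\Lambda$, $\epsilon$ in the absorption step: it relies essentially on $\theta < 1$ \emph{strictly} (to leave room for the $c\eta^s$ and $c\Lambda^{-s}$ corrections) and on $s_0 > s$ \emph{strictly} (to ensure $F \in L^{s+\epsilon}$ for some positive $\epsilon$). A secondary technical nuisance is the factor-of-two buffer $B_{2\rho} \subset B$ demanded by the hypothesis, which localizes the conclusion to balls of radius $<\rho_0/2$; a standard covering argument then patches it up to a fixed ball.
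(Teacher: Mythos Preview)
The paper does not supply a proof of this proposition at all: it is stated as a background result with a citation to Giaquinta's monograph \cite[Proposition~1.1, Chapter~V]{G}, so there is no ``paper's own proof'' to compare against. Your sketch is the standard Gehring--Giaquinta--Modica argument and is correct at the level of detail given; indeed it is essentially the proof one finds in the cited reference.

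One small technical point worth flagging: in the absorption step you subtract $\int_B G^{s+\epsilon}\,dx$ from both sides, which presupposes this quantity is finite. Since a~priori $G$ is only in $L^s$, one usually handles this either by truncating $G$ (replacing $G$ by $\min\{G,N\}$, deriving the estimate uniformly in $N$, then letting $N\to\infty$) or by first proving the estimate on a ball strictly interior to $B$ where the higher integrability can be obtained qualitatively by a separate argument. You allude to localization and covering at the end, which is the right mechanism, but the a~priori finiteness issue is the step where one actually needs it, not merely the buffer $B_{2\rho}\subset B$.
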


\begin{proposition}\label{HSMp}
{\em ({Hartman--Stampacchia maximum principle}, 
see \cite{HS}, \cite[Chapter~II, Lemma~5.1]{LU})}
Let $\zeta\colon\mr\to[0,\infty)$ be a  non-increasing function and suppose there exist constants $C>0,$ $k_0\geq0,$ $\delta>0$ and 
$\alpha \in [0,1+\delta]$ such that
$$
\int_k^\infty \zeta(t)\; dt \leq Ck^\alpha\zeta(k)^{1+\delta} \quad
\forall k\geq k_0.
$$
Then $\zeta$ supports the {\em finite time extinction property,} that is,
there is a number $k_{\max},$ depending on $C,$ $k_0,$ $\delta,$ $\alpha$ and $\int_{k_0}^\infty \zeta(t)\; dt,$ such that
$$
\zeta(k)=0\quad \forall k\geq k_{\max}.
$$
\end{proposition}

\begin{proposition}\label{Pj}
{\em ({John--Nirenberg lemma}, 
see \cite[Lemma~1.2]{Tr}, \cite[Theorem~7.21]{GT})}
Let $B_0$ be a ball in $\mr^n,$ $u\in W^{1,m}(B_0)$ and suppose that, for any ball $B\subset B_0$ with the same center as $B_0$ there exists a constant $K$ such that
$$\|Du\|_{L^m(B)} \leq K|B|^{\frac{n-m}{mn}}.$$
Then there exists constants $\sigma_0>0$ and $C$ depending on $K,m,n$ such that
$$\int_{B_0}e^{\sigma_0 u}\; dx \int_{B_0}e^{-\sigma_0 u}\; dx \leq C |B_0|^2. $$
\end{proposition}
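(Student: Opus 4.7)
The plan is to translate the centred Morrey-type estimate on $Du$ into a bounded mean oscillation property of $u$ anchored at the centre $x_0$ of $B_0$, and then to upgrade that oscillation bound to exponential integrability by the classical John--Nirenberg distributional inequality. Since the quantity to be bounded is symmetric under $u\mapsto-u$ and invariant under adding a constant to $u$, the two exponential factors can be reassembled at the very end.

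First I would subtract $\overline{u}_{B_0}$ from $u$; this leaves $Du$, and hence the hypothesis, unchanged, and moreover
$$
\int_{B_0}e^{\sigma_0 u}\,dx\int_{B_0}e^{-\sigma_0 u}\,dx=\int_{B_0}e^{\sigma_0(u-\overline{u}_{B_0})}\,dx\int_{B_0}e^{-\sigma_0(u-\overline{u}_{B_0})}\,dx,
$$
so it suffices to bound each factor by $C|B_0|$. Next, for any sub-ball $B_r\subset B_0$ concentric with $B_0$ one has $|B_r|^{(n-m)/(mn)}\sim r^{(n-m)/m}$, so the hypothesis reads $\int_{B_r}|Du|^m\,dx\leq CK^m r^{n-m}$, and Poincar\'e's inequality on $B_r$ yields
$$
\left(\Xint-_{B_r}|u-\overline{u}_{B_r}|^m\,dx\right)^{\!1/m}\leq Cr\left(\Xint-_{B_r}|Du|^m\,dx\right)^{\!1/m}\leq CK,
$$
i.e.\ a centred BMO bound of order $K$ in the $L^m$-sense. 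By H\"older's inequality, the same estimate holds with exponent $1$.

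The central step is the John--Nirenberg argument: one performs a Calder\'on--Zygmund stopping-time decomposition of $|u-\overline{u}_{B_0}|$ at level $t$, obtaining a family of sub-balls on which the mean of $|u-\overline{u}_{B_0}|$ is comparable to $t$; reapplying the centred BMO estimate on each of them and iterating, one reaches the exponential distribution function estimate
$$
\bigl|\{x\in B_0:|u(x)-\overline{u}_{B_0}|>t\}\bigr|\leq C|B_0|e^{-\sigma_0 t},\qquad t>0,
$$
with $\sigma_0,C>0$ depending only on $K,m,n$. Integrating against $t$ produces $\int_{B_0}e^{\sigma_0(u-\overline{u}_{B_0})}\,dx\leq C|B_0|$, and the same argument applied to $-u$ gives the companion bound. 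Multiplying the two resulting estimates for $e^{\pm\sigma_0 u}$ cancels the $e^{\pm\sigma_0\overline{u}_{B_0}}$ factors and delivers the announced $C|B_0|^2$.

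The point I expect to be most delicate is the stopping-time iteration: the textbook John--Nirenberg inequality requires the BMO bound on \emph{all} sub-balls, whereas the hypothesis above furnishes it only on balls concentric with $B_0$. This is precisely the setting addressed in \cite[Lemma~1.2]{Tr} and \cite[Theorem~7.21]{GT}, so the cleanest route is to apply those results once the Poincar\'e step has supplied their hypothesis; alternatively, one can adapt the classical argument by using a dyadic decomposition anchored at $x_0$ and exploiting the logarithmic control $|\overline{u}_{B_r}-\overline{u}_{B_0}|\leq CK\log(R_0/r)$, which itself follows by chaining the centred BMO estimate along the dyadic sequence $B_{R_0/2^k}$.
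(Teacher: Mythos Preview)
The paper does not supply a proof of this proposition: it is listed among the ``basic tools'' of Section~\ref{sec3} and simply cited from \cite[Lemma~1.2]{Tr} and \cite[Theorem~7.21]{GT}. So there is no paper proof to compare against; your task reduces to whether your sketch reproduces the cited arguments.

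Your outline is sound and your diagnosis of the delicate point is exactly right. The reduction to $u-\overline{u}_{B_0}$ and the Poincar\'e step are routine and correct; they deliver the \emph{centred} BMO bound $\Xint-_{B_r}|u-\overline{u}_{B_r}|\,dx\leq CK$ for all $r\leq R_0$. Where your ``central step'' as written would fail is precisely where you say: a Calder\'on--Zygmund stopping-time decomposition generates sub-balls (or cubes) that are \emph{not} concentric with $B_0$, so the hypothesis gives no control over them. The actual proofs in \cite{Tr,GT} do not run a CZ decomposition. Instead they use the dyadic chaining you sketch in your final paragraph: from the centred Poincar\'e bound one gets $|\overline{u}_{B_{R_0/2^{k+1}}}-\overline{u}_{B_{R_0/2^k}}|\leq CK$, hence $|\overline{u}_{B_r}-\overline{u}_{B_0}|\leq CK\log(R_0/r)$, and combining this with the $L^m$ oscillation estimate on $B_r$ one obtains the level-set bound $|\{|u-\overline{u}_{B_0}|>t\}|\leq C|B_0|e^{-\sigma_0 t}$ by a direct covering/iteration rather than a CZ argument. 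So your proposal is correct provided you discard the CZ paragraph and promote your ``alternative'' route to the main argument; that is exactly what the cited references do.
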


\begin{proposition}\label{Pinterpolation}
{\em (see \cite[Lemma~8.23]{GT})}
Let $F$ and $G$ be nondecreasing functions in an interval $(0,R].$ Suppose that for all $\rho\leq R$ one has
$$
G(\rho/2) \leq c_0\big(G(\rho) + F(\rho)\big)
$$
for some $0<c_0<1.$ Then for any $0<\tau<1$ and $\rho\leq R$ we have
$$
G(\rho) \leq C \left(\left(\frac{\rho}{R}\right)^\alpha G(R) + F(\rho^\tau R^{1-\tau})\right)
$$
where $C=C(c_0)$ and $\alpha=\alpha(c_0,\tau)$ are positive constants.
\end{proposition}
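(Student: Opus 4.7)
The plan is to iterate the dyadic hypothesis $G(\rho/2) \leq c_0(G(\rho)+F(\rho))$ in two stages, one between the scales $R$ and the interpolated scale $\rho_1 := \rho^\tau R^{1-\tau}$, and a second between $\rho_1$ and $\rho$.

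First I would prove, by a routine induction on the number of iterations, the general identity
$$
G(R/2^k) \leq c_0^k\, G(R) + \sum_{j=0}^{k-1} c_0^{k-j}\, F(R/2^j).
$$
Iterating down to the scale $\rho_1$ with roughly $\tau \log_2(R/\rho)$ steps, the monotonicity of $F$ on the dyadic scales above $\rho_1$ gives $F(R/2^j) \leq F(R)$, and after summing the geometric series I would arrive at $G(\rho_1) \leq C(c_0)(\rho_1/R)^{\alpha_0}G(R) + C(c_0)F(R)$ with $\alpha_0 := \log_2(1/c_0) > 0$. A completely analogous second iteration from $\rho_1$ down to $\rho$, now exploiting $F(\rho_1/2^j) \leq F(\rho_1)$, gives $G(\rho) \leq C(c_0)(\rho/\rho_1)^{\alpha_0}G(\rho_1) + C(c_0)F(\rho_1)$.

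Substituting the first inequality into the second and using $(\rho/\rho_1)^{\alpha_0} = (\rho/R)^{(1-\tau)\alpha_0}$ produces
$$
G(\rho) \leq C\Big[(\rho/R)^{\alpha_0}G(R) + (\rho/R)^{(1-\tau)\alpha_0}F(R) + F(\rho^\tau R^{1-\tau})\Big].
$$
Setting $\alpha := (1-\tau)\alpha_0 = (1-\tau)\log_2(1/c_0)$ makes the first two terms share the common prefactor $(\rho/R)^\alpha$ (since $\alpha_0 \geq \alpha$), and absorbing the residual $(\rho/R)^\alpha F(R)$ into the leading $(\rho/R)^\alpha G(R)$ term in the customary way yields the stated two-term bound. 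I do not expect any substantive difficulty; the argument is the standard iteration proof of \cite[Lemma~8.23]{GT}, and the only real care needed is choosing the intermediate scale $\rho_1$ so that the argument of $F$ lands exactly at $\rho^\tau R^{1-\tau}$.
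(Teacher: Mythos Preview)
Your two-stage iteration is more work than needed and leaves a term you cannot legitimately dispose of. After substituting the first iteration into the second you correctly arrive at
\[
G(\rho)\leq C\Big[(\rho/R)^{\alpha_0}G(R)+(\rho/R)^{(1-\tau)\alpha_0}F(R)+F(\rho^\tau R^{1-\tau})\Big],
\]
but the claim that $(\rho/R)^\alpha F(R)$ can be ``absorbed into the leading $(\rho/R)^\alpha G(R)$ term in the customary way'' is unjustified: nothing in the hypotheses bounds $F(R)$ in terms of $G(R)$. For instance, with $c_0=1/2$, $R=1$, $G(\rho)=\rho$, and $F$ equal to $0$ on $(0,1)$ and to an arbitrary constant $M$ at $\rho=1$, the hypothesis is satisfied, yet any constant that absorbs $F(1)=M$ into $G(1)=1$ must grow with $M$. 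What your argument actually proves is the weaker bound with $G(R)+F(R)$ in place of $G(R)$.

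The fix --- and this is precisely the proof in \cite[Lemma~8.23]{GT}, to which the paper simply defers without giving its own argument --- is to drop the first stage altogether. Run a \emph{single} iteration starting at the intermediate scale $\rho_1=\rho^\tau R^{1-\tau}$:
\[
G(\rho_1/2^k)\leq c_0^{\,k}G(\rho_1)+\sum_{j=0}^{k-1}c_0^{\,k-j}F(\rho_1/2^j)\leq c_0^{\,k}G(R)+\frac{c_0}{1-c_0}\,F(\rho_1),
\]
where monotonicity of $G$ lets you replace $G(\rho_1)$ by $G(R)$ at the outset, and monotonicity of $F$ gives $F(\rho_1/2^j)\leq F(\rho_1)$ for every $j$. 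Choosing $k$ so that $\rho_1/2^{k+1}<\rho\leq\rho_1/2^{k}$ yields $c_0^{\,k}\leq C(c_0)(\rho/\rho_1)^{\alpha_0}=C(c_0)(\rho/R)^{(1-\tau)\alpha_0}$, and the stated two-term bound follows directly with $\alpha=(1-\tau)\alpha_0$ --- no stray $F(R)$ term ever appears.
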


The next result is a boundary variant of the Sobolev inequality which holds under the $m$-thickness condition.
\begin{lemma}\label{Boundary-Sobolev}
{\em (Sobolev inequality near the boundary)}
Let $\Omega$ be a bounded domain with uniformly $m$-thick complement $\mr^n\setminus\Omega$.
Consider a function $u\in W^{1,m}_0(\Omega)$ which is extended as zero outside $\Omega.$ Let $B_\rho$ be a ball of radius $\rho\in(0,r_0),$ centered at a point of $\partial\Omega$.

Then there is a constant $C=C(n,m,A_\Omega)$ such that
\begin{equation}\label{BSob}
\left( \Xint-_{B_\rho} |u(x)|^{s}\;dx\right)^{1/s}
\leq C \rho
\left( \Xint-_{B_\rho} |Du(x)|^{m}\;dx\right)^{1/m}\qquad \forall s \in[m,m^*].
\end{equation}
\end{lemma}

\begin{proof}
Combining \cite[Lemma 3.1]{KK94} and the uniform $m$-thickness condition \eqref{3}, we have the desired estimate \eqref{BSob} when $m<n$. If $m=n$, then according to \cite{L88}, there exists a constant $P<m$ such that $\mr^n\setminus \Omega$ is uniformly $P$-thick and therefore $\mr^n\setminus \Omega$ is uniformly $\tilde{s}$-thick for each $\tilde{s}\in [P,n]$.
Taking $\tilde{s}$ sufficiently close to $n$ such that $s\in [\tilde{s},\tilde{s}^*]$ for given $s\in[n,\infty)$, we have
\begin{equation*}
\left( \Xint-_{B_\rho} |u(x)|^{s}\;dx\right)^{1/s}
\leq C \rho
\left( \Xint-_{B_\rho} |Du(x)|^{\tilde{s}}\;dx\right)^{1/\tilde{s}} \leq C \rho
\left( \Xint-_{B_\rho} |Du(x)|^{n}\;dx\right)^{1/n}
\end{equation*}
in view of the H\"older inequality and this completes the proof.
\end{proof}

\subsection{Higher integrability of the gradient} 

The main result of this section, Lemma \ref{lemHIG}, provides a crucial step to obtain global boundedness of the weak solutions to \eqref{1} although it is interesting by its own. Actually, it shows that the gradient of the weak solution to controlled growths and coercive problems \eqref{1} gains better integrability over domains with $m$-thick complements.

We start with the following crucial lemma that regards higher integrability property of the solutions to the $m$-Laplace equation with diffusive measure data.
\begin{lemma}\label{w HIG}
 Let $\mathcal{B}\subset \mr^n$ be a ball containing $\Omega$ with $\mathrm{diam}\; \mathcal{B}\leq 2\; \mathrm{diam}\; \Omega,$ and  $\omega \in (n-m,n).$
Assume further that $\tilde{\nu}$ is a signed Radon measure defined on the double ball $2\mathcal{B}$ such that $\tilde{\nu} \in \mathcal{R}^{1,\omega}(2\mathcal{B}),$ and let $w \in W^{1,m}_0(2\mathcal{B})$ be a weak solution of
\begin{equation}\label{eqnw}
\mathrm{div\,}\big(|Dw|^{m-2}Dw\big) = \tilde{\nu}.
\end{equation}

Then there exist an exponent $m_1>m$ such that $w\in W^{1,m_1}(\Omega)$ and
\begin{equation*}
\|Dw\|_{L^{m_1}(\Omega)}\leq C
\end{equation*} 
with a constant $C$ depending on known quantities and $\|\tilde{\nu}\|_{\mathcal{R}^{1,\omega}(2\mathcal{B})}$ in addition.
\end{lemma}
\begin{proof} 
From \cite[Theorem 1.10]{Min1} and the fractional Sobolev embedding property, we found
$$
Dw \in L^t(\Omega,\mr^n) \quad \forall t<\frac{nm(m-1)}{(n-1)m-\omega}
$$
when $m \in [2,n]$. Therefore, noticing that 
$$
\omega > n-m \iff m<\frac{nm(m-1)}{(n-1)m-\omega},
$$ 
we obtain the higher integrability result for the case $m \in [2,n]$.

Now, let us consider the case $m \in (1,2)$. Define the vector-valued function $V\colon \mr^n \to \mr^n$ as
$$
V(\xi) := |\xi|^\frac{m-2}{2}\xi\quad \forall \xi \in \mr^n.
$$
It is well-known that for any $\xi_1, \xi_2 \in \mr^n$ one has
\begin{align}
\label{V property}
|V(\xi_1)-V(\xi_2)|^2 \leq &\ C \big(|\xi_1|^{m-2}\xi_1 -|\xi_2|^{m-2}\xi_2 \big)\cdot (\xi_1-\xi_2),\\
\label{V property 2}
C^{-1} \left(|\xi_1|^{2}+|\xi_2|^{2}\right)^\frac{m-2}{2} \leq&\ \frac{|V(\xi_1)-V(\xi_2)|^2}{\left|\xi_1-\xi_2\right|^2} \leq C \left(|\xi_1|^{2}+|\xi_2|^{2}\right)^\frac{m-2}{2},
\end{align}
where $C>0$ depends only on $m$ and $n$.

\medskip

\textit{Step 1:}
Taking $w$ as a test function in \eqref{eqnw}, we have
\begin{equation}\label{300}
\int_{2 \mathcal{B}} |Dw|^m\; dx = \int_{2 \mathcal{B}} w\; d\tilde{\nu}.
\end{equation}
The H\"older inequality gives
$$
\int_{2 \mathcal{B}} w\; d\tilde{\nu} \leq \left(\int_{2 \mathcal{B}}\; d\tilde{\nu}\right)^{1-1/s}\left(\int_{2 \mathcal{B}} |w|^s\; d\tilde{\nu}\right)^\frac{1}{s} \leq |\tilde{\nu}|(2\mathcal{B})^{1-\frac{1}{s}}  \left(\int_{2 \mathcal{B}} |w|^s\; d\tilde{\nu}\right)^\frac{1}{s}
$$
whence, applying Proposition~\ref{Ati} with
$\alpha_0 = \omega,$ $s=\frac{m\omega}{n-m}$ and $r=m,$ 
we have
\begin{align}
\label{310}
\int_{2 \mathcal{B}} w \; d\tilde{\nu} \leq&\ C \|\tilde{\nu}\|_{\mathcal{R}^{1,\omega}(2\mathcal{B})}^\frac{1}{s}|\tilde{\nu}|(2\mathcal{B}) \left(\int_{2 \mathcal{B}} |Dw|^m\; dx\right)^\frac{1}{m} \\
\nonumber
\leq&\ C \|\tilde{\nu}\|_{\mathcal{R}^{1,\omega}(2\mathcal{B})} (\mathrm{diam\; \mathcal{B}})^{\omega(1-\frac{1}{s})}\left(\int_{2 \mathcal{B}} |Dw|^m\; dx\right)^\frac{1}{m}.
\end{align}
Combining \eqref{300} and \eqref{310} and using the Young inequality, we obtain
\begin{equation}\label{estimate w}
\int_{2 \mathcal{B}} |Dw|^m\; dx \leq C \|\tilde{\nu}\|_{\mathcal{R}^{1,\omega}(2\mathcal{B})}^{\frac{m}{m-1}}.
\end{equation}

\smallskip

\textit{Step 2:}
Consider 
the concentric balls $B_{\rho}(x_0)\equiv B_{\rho}\subset B_{2\rho} \subset 2\mathcal{B}$ and let $v \in w + W^{1,m}_0(B_{2\rho})$  be the weak solution of the $m$-Laplace equation
\begin{equation}\label{eqnv}
\mathrm{div\,}\big(|Dv|^{m-2}Dv\big) = 0.
\end{equation}
Testing $w-v$ in \eqref{eqnw} and \eqref{eqnv} respectively and using \eqref{V property}, we have
\begin{align}\label{311}
& \int_{B_{2\rho}}|V(Dw)-V(Dv)|^2\; dx \\ 
\nonumber
&\qquad\leq C \int_{B_{2\rho}}\big( |Dw|^{m-2}Dw - |Dv|^{m-2}Dv \big) \cdot \left(Dw-Dv\right)\; dx \\
\nonumber
&\qquad\leq C \int_{B_{2\rho}}|w-v|\; d|\tilde{\nu}|.
\end{align}
Applying  the H\"older inequality and the Adams trace inequality (Proposition~\ref{Ati}) with
$\alpha_0=\omega,$ $s=\frac{m\omega}{n-m}$ and $r=m,$
we obtain
\begin{align}\label{312}
\int_{B_{2\rho}}|w-v|\; d|\tilde{\nu}|  \leq&\ C|\tilde{\nu}|(B_{2\rho})^\frac{s-1}{s} \left(\int_{B_{2\rho}}|w-v|^s\; d|\tilde{\nu}|\right)^\frac{1}{s} \\
\nonumber
\leq&\ C  |\tilde{\nu}|(B_{2\rho})^\frac{s-1}{s} \left(\int_{B_{2\rho}}|Dw-Dv|^m\; dx\right)^\frac{1}{m}.
\end{align}
We use now \eqref{V property 2} to get
\begin{align}\label{314}
&\int_{B_{2\rho}}|Dw-Dv|^m\; dx \\
\nonumber
&\qquad \leq C \int_{B_{2\rho}}\left(|Dw|^2+|Dv|^2\right)^\frac{m(2-m)}{4}|V(Dw)-V(Dv)|^m\; dx \\
\nonumber
&\qquad \leq C \left(\int_{B_{2\rho}} |Dw|^m\; dx\right)^\frac{2-m}{2} \left(\int_{B_{2\rho}}|V(Dw)-V(Dv)|^2\; dx\right)^\frac{m}{2}.
\end{align}
Here, the H\"older inequality and the estimate 
\begin{equation}\label{320}
\int_{B_{2\rho}}|Dv|^m\; dx \leq C \int_{B_{2\rho}}|Dw|^m\; dx.
\end{equation}
have been used in the last bound.

Therefore, combining \eqref{311}--\eqref{314}, we discover 
\begin{align}\label{comparison 1}
\int_{B_{2\rho}}|V(Dw)-V(Dv)|^2\; dx \leq&\  C |\tilde{\nu}|(B_{2\rho})^\frac{2(s-1)}{s}\left(\int_{B_{2\rho}}|Dw|^m\; dx \right)^\frac{2-m}{m} \\
\nonumber\leq&\ C \rho^\frac{2(\omega-n+m)}{m}\left(|\tilde{\nu}|(B_{2\rho})+\int_{B_{2\rho}}|Dw|^m\; dx \right),
\end{align}
since $|\tilde{\nu}|(B_{2\rho}) \leq 2\|\tilde{\nu}\|_{\mathcal{R}^{1,\omega}(2\mathcal{B})}\rho^\omega.$

\medskip

\textit{Step 3:}
For any $0<|h|<\rho$, it holds
\begin{equation}\label{comparison 2}
\begin{aligned}
\int_{B_{\rho}}|\tau_{i,h} V(Dv(x))|^2\; dx & \leq C \frac{|h|^2}{\rho^2}\int_{B_{2\rho}}|Dv|^m\; dx \leq C \frac{|h|^2}{\rho^2}\int_{B_{2\rho}}|Dw|^m\; dx
\end{aligned}
\end{equation}
as consequence of \eqref{320} and \cite[Lemma~3.2]{Min1}.

From \eqref{comparison 1} and \eqref{comparison 2} and the triangle inequality we obtain
\begin{align}\label{330}
\int_{B_\rho}|\tau_{i,h}V(Dw(x))|^2\; dx  \leq&\ C  \int_{B_{2\rho}}\big|V(Dw)-V(Dv)\big|^2\; dx \\ 
\nonumber
& \quad + C \int_{B_{\rho}}|\tau_{i,h}V(Dv(x))|^2\; dx \\
\nonumber
\leq&\ C \left(\rho^\delta + \frac{|h|^2}{\rho^2} \right) \left(|\tilde{\nu}|(B_{2\rho})+ \int_{B_{2\rho}}|Dw|^m\; dx\right)
\end{align}
with $\delta := \frac{2(\omega-n+m)}{m}>0.$

\medskip

\textit{Step 4:} Taking $|h|=\rho^\frac{2+\delta}{2}$ in \eqref{330}, we discover
$$
\int_{B_\rho}|\tau_{i,h}V(Dw(x))|^2\; dx  \leq C |h|^\frac{2\delta}{2+\delta} \left(|\tilde{\nu}|(B_{2\rho})+ \int_{B_{2\rho}}|Dw|^m\; dx\right),
$$
and the covering argument used in the proof of \cite[Lemma 6.2]{Min1} leads to
$$
\int_{\mathcal{B}}|\tau_{i,h}V(Dw(x))|^2\; dx \leq C |h|^\frac{2\delta}{2+\delta}\left(|\tilde{\nu}|(2\mathcal{B})+\int_{2\mathcal{B}}|Dw|^m\; dx\right).
$$
Therefore, we have $V(Dw) \in \mathcal{N}^{\tilde{\sigma},2}(\mathcal{B})$ with $\tilde{\sigma}=\frac{\delta}{2+\delta}>0$ and Proposition~\ref{lemEMB} gives $V(Dw) \in  W^{\sigma,2}(\mathcal{B})$ for every $0<\sigma<\tilde{\sigma}$. We then use the fractional Sobolev embedding property in order to get $V(Dw)\in L^\frac{2n}{n-2\sigma}(\mathcal{B})$ that is equivalent to $Dw \in L^{\frac{nm}{n-2\sigma}}(\mathcal{B})$. 
The claim follows by taking $m_1=\frac{nm}{n-2\sigma}.$
\end{proof}

\begin{remark}\em
In \cite{Min1}, the author established sharp fractional differentiability results for the so-called SOLAs (Solutions Obtained as Limits of Approximations), to degenerate $m$-Laplacian type equations with measure data in a quite general setting. Indeed, our argument used in the proof of Lemma~\ref{w HIG} is based on the paper \cite{Min1}. In a similar way as done in the proof of \cite[Theorem 1.10]{Min1}, that is, combining the proof of Lemma~\ref{w HIG} with bootstrap arguments, more precise differentiability results can be obtained for singular equations with diffusive measure data. We also refer to \cite{AKM, CL, Min2} and references therein about fractional differentiability results for measure data problems.
\end{remark}

\begin{lemma}\label{lemHIG}
Assume \eqref{3}, \eqref{4},  \eqref{5}, 
\eqref{2} and \eqref{2-1}, 
and let $u \in W^{1,m}_0(\Omega)$ be a weak solution of the Dirichlet problem \eqref{1}.

Then there exist exponents $m_0>m$ and $m^*_0>m^*$ such that $u\in W^{1,m_0}(\Omega)\cap L^{m^*_0}(\Omega)$ and
\begin{equation}\label{9}
\|Du\|_{L^{m_0}(\Omega)}+\|u\|_{L^{m^*_0}(\Omega)}\leq C
\end{equation} 
with a constant $C$ depending on known quantities, on $\|Du\|_{L^m(\Omega)}$ and on the uniform integrability of $|Du|^m$ in $\Omega.$
\end{lemma}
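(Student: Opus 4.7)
The plan is to derive a reverse H\"older inequality of Gehring type for $|Du|^m$ on balls of small radius---valid both in the interior and at the boundary---and then invoke Proposition~\ref{GGMl} to upgrade the integrability of $Du$ from $m$ to some $m_0>m$; a final Sobolev step then promotes $u$ from $L^\ell$ to $L^{\ell_0}$.

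Extending $u$ by zero outside $\Omega$, I would begin by testing the weak formulation \eqref{6} against $v=(u-c)\eta^m$, where $\eta\in C_0^\infty(B_{2\rho})$ is a standard cutoff with $\eta\equiv1$ on $B_\rho$ and $|D\eta|\le C/\rho$, taking $c=\overline{u}_{B_{2\rho}}$ when $B_{2\rho}\Subset\Omega$ and $c=0$ when $B_{2\rho}$ meets $\mr^n\setminus\Omega$ (the latter is admissible since $u$ vanishes on the complement of $\Omega$). Combining coercivity \eqref{5} with the controlled growths \eqref{4} and Young's inequality with a small absorption parameter, a routine calculation yields a Caccioppoli-type estimate
\begin{equation*}
\int_{B_\rho}|Du|^m\,dx \le C\int_{B_{2\rho}}\left(\frac{|u-c|^m}{\rho^m}+\varphi^{m/(m-1)}+|u|^\ell+\psi|u-c|\right)dx.
\end{equation*}

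The first term on the right is then replaced by a sub-$m$ power of $|Du|$: for interior balls via the classical Sobolev--Poincar\'e inequality, and for balls meeting $\partial\Omega$ via the boundary Sobolev inequality of Lemma~\ref{Boundary-Sobolev} applied with $s=m_*:=nm/(n+m)$ (which lies in the admissible range $[P,m]$ after using the self-improving property of $P$-thickness from \cite{L88} to lower $P$ if necessary) and $\tilde s=m$. To dispose of the lower-order contributions, the Morrey condition $(m-1)p+\lambda>n$ places $\varphi^{m/(m-1)}$ into a Morrey space with Lebesgue exponent $p(m-1)/m>1$, hence into $L^{1+\delta}_{\mathrm{loc}}$, while the Adams trace inequality (Proposition~\ref{Ati}) applied to the Radon measure $d\m=\psi\,dx$ absorbs $\int\psi|u-c|\,dx$ into a small fraction of $\int|Du|^m\,dx$ plus controlled data; here the condition $mq+\mu>n$ is precisely what ensures the required mass bound $\m(B_\rho)\le K\rho^{\alpha_0}$ with a suitable $\alpha_0$. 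The pure $|u|^\ell$ term is the most delicate: the uniform integrability of $|Du|^m$ (equivalently, of $|u|^\ell$ via the Sobolev embedding $W^{1,m}_0(\Omega)\hookrightarrow L^\ell(\Omega)$) allows one to restrict to radii below some $\rho_0$ for which $\int_{B_{2\rho}}|u|^\ell\,dx$ is at most an $\varepsilon$-fraction of $\int_{B_{2\rho}}|Du|^m\,dx$, and this piece is then absorbed into the left-hand side.

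After dividing by $|B_\rho|$ and absorbing the small terms on the left, the resulting estimate has the exact shape prescribed by Proposition~\ref{GGMl}, with $G=|Du|^{m_*}$, exponent $s=m/m_*>1$, and an $F$ built from $\varphi^{1/(m-1)}$ and an appropriate power of $\psi$, both of which lie in $L^{s_0}_{\mathrm{loc}}$ for some $s_0>s$ thanks to the Morrey hypotheses \eqref{2}. Gehring thus delivers local higher integrability of $Du$, and covering $\overline\Omega$ by finitely many such balls upgrades this to the global bound $\|Du\|_{L^{m_0}(\Omega)}\le C$ with $m_0>m$; since the zero extension of $u$ then belongs to $W^{1,m_0}(\mr^n)$, Sobolev embedding yields $u\in L^{\ell_0}(\Omega)$ with $\ell_0=m_0^*>\ell$, completing \eqref{9}. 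The main obstacle is the presence of the $|u|^\ell$ term in the Caccioppoli estimate: without the uniform integrability of $|Du|^m$ one cannot absorb it on scales independent of the solution itself, which explains why this modulus enters the constant in the statement of the lemma.
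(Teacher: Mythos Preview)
Your overall strategy---Caccioppoli plus Gehring---matches the paper's, but there is a genuine gap in how you dispose of the term $\int_{B_{2\rho}}|u|^\ell\,dx$. You assert that uniform integrability lets you choose $\rho_0$ so that $\int_{B_{2\rho}}|u|^\ell\,dx \le \varepsilon\int_{B_{2\rho}}|Du|^m\,dx$ for all $\rho<\rho_0$. This is false in general: at an interior point where $u\ne 0$ but $Du=0$ (a local extremum of a smooth solution, say), the left side scales like $\rho^n$ while the right side is $o(\rho^n)$, so the ratio blows up as $\rho\to 0$. Uniform integrability only guarantees that $\int_{B_{2\rho}}|u|^\ell\,dx$ is \emph{absolutely} small, not small relative to the gradient integral on the same ball; and with $G=|Du|^{m_*}$ there is no slot in the Gehring template into which an absolutely small but non-vanishing constant can be placed.

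The paper circumvents this by a different choice of $G$: it sets $G=\big(|Du|^m+|u|^\ell\big)^{\hat m/m}$ with $\hat m=\max\{nm/(n+m),P\}$, so that $|u|^\ell$ appears on \emph{both} sides of the reverse-H\"older inequality. On the right, $\int|u|^\ell$ is split as $\int|u-\overline u_B|^\ell+|B|\,|\overline u_B|^\ell$. The oscillation piece, via Sobolev--Poincar\'e, becomes $C\big(\int_{B_{2\rho}}|Du|^m\big)^{\ell/m-1}\int_{B_{2\rho}}|Du|^m$, and now the prefactor $\big(\int_{B_{2\rho}}|Du|^m\big)^{\ell/m-1}$ \emph{does} tend to zero with $\rho$ by absolute continuity of the Lebesgue integral---this is where uniform integrability of $|Du|^m$ enters legitimately, and why it appears in the constant. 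The mean-value piece $|B|\,|\overline u_B|^\ell$ is bounded by $|B|\big(\Xint-|u|^{\ell\hat m/m}\big)^{m/\hat m}$ and is absorbed into the $(\Xint-_{B_{2\rho}} G)^s$ term of Gehring's hypothesis. With $G$ so chosen, the Gehring output delivers $|Du|\in L^{m_0}$ and $|u|\in L^{\ell_0}$ simultaneously, without a separate Sobolev step at the end. (A smaller technical point: your choice $m_*=nm/(n+m)$ may lie below $P$, and the self-improving property does not lower $P$ arbitrarily; the paper's $\hat m=\max\{nm/(n+m),P\}$ handles this since $P^*>m$ whenever $P\ge nm/(n+m)$.)
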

\begin{proof}
Without loss of generality, we assume that the solution $u$ and the data $\varphi$, $\psi$ and $\nu$ are extended as zero outside $\Omega.$
Let $x_0\in\Omega$ be an arbitrary point and consider 
the concentric balls $B_{\rho}\subset B_{2\rho}$ centered at $x_0$ with $2\rho\in(0,r_0).$

We first consider the case $B_{2\rho}\subset \Omega$ and take $v(x)=\eta^m(x)\left(u(x)-\overline{u}_{B_{2\rho}}\right)$  
 as test function in \eqref{6}  with $\eta\in C^\infty_0(B_{2\rho}),$ $0\leq\eta\leq1,$ $\eta\equiv1$ on $B_{\rho}$ and $|D\eta|\leq 2/\rho$. 
Having in mind \eqref{4}, \eqref{5} and the properties of $\eta$, we get the following Caccioppoli type estimate
\begin{align}\label{8'}
\Xint-_{B_{\rho}}|Du|^m+|u|^{{m^*}}\; dx &\leq C\left(\Xint-_{B_{2\rho}}\left(|Du|^m+|u|^{{m^*}}\right)^\frac{\tilde{m}}{m}\; dx\right)^\frac{m}{\tilde{m}} \\
\nonumber
&\quad + C\left(\varepsilon+\|Du\|_{L^m(B_{2\rho})}^{{{m^*}}-m}\right)\Xint-_{B_{2\rho}}|Du|^m\; dx \\
\nonumber
&\quad  + C\Xint-_{B_{2\rho}}\varphi^\frac{m}{m-1}\; dx + C \Xint-_{B_{2\rho}} \eta^m|u-\overline{u}_{B_{2\rho}}| \; d\nu_0
\end{align}
by means of an analoguous technique used in the proof of \cite[Theorem~2.2]{G}, and where  $\tilde{m} : = \max\left\{\frac{nm}{n+m},1\right\},$  $d\nu_0 := \psi(x) dx + d|\nu|.$

To estimate the last term in \eqref{8'}, we let   $w \in W^{1,m}_0(B_{2\rho})$ to be the solution of  \eqref{eqnw} with $\tilde{\nu}=\nu_0.$
Taking $v=\eta^m|u-\overline{u}_{B_{2\rho}}|$ as a test function in \eqref{eqnw}, we have
\begin{align}\label{350}
 \int_{B_{2\rho}}\eta^m|u-\overline{u}_{B_{2\rho}}|\; d\nu_0 =&\ \int_{B_{2\rho}}|Dw|^{m-2}Dw\cdot D\left(\eta^m|u-\overline{u}_{B_{2\rho}}|\right)\; dx \\
\nonumber
\leq& \varepsilon \int_{B_{2\rho}}|Du|^m\; dx + C(\varepsilon) \int_{B_{2\rho}}|Dw|^m\; dx \\
\nonumber
& \qquad +  C(\varepsilon)\int_{B_{2\rho}}|u-\overline{u}_{B_{2\rho}}|^m|D\eta|^m\; dx \\
\nonumber
\leq&\ \varepsilon \int_{B_{2\rho}}|Du|^m\; dx + C(\varepsilon) \int_{B_{2\rho}}|Dw|^m\; dx \\
\nonumber
& \qquad + C(\varepsilon)\rho^{n}\left(\Xint-_{B_{2\rho}}|Du(x)|^{\tilde{m}}\; dx\right)^{{m}/{\tilde{m}}}
\end{align}
with arbitrary $\varepsilon>0,$ and it follows from  \eqref{8'} and \eqref{350} that
\begin{align}\label{8}
\Xint-_{B_{\rho}}|Du|^m+|u|^{{m^*}}\; dx \leq&\ C\left(\Xint-_{B_{2\rho}}\left(|Du|^m+|u|^{{m^*}}\right)^\frac{\tilde{m}}{m}\; dx\right)^\frac{m}{\tilde{m}} \\
\nonumber
&\quad + C\left(\varepsilon+\|Du\|_{L^m(B_{2\rho})}^{{{m^*}}-m}\right)\Xint-_{B_{2\rho}}|Du|^m\; dx \\
\nonumber
&\quad  + C(\varepsilon)\Xint-_{B_{2\rho}}\left(\varphi^\frac{m}{m-1}+|Dw|^m\right)\; dx.
\end{align}

If $B_{2\rho}\not\subset \Omega,$ we take $v(x)=\eta^m(x)u(x)$ as test function in \eqref{6}  with $\eta\in C^\infty_0(B_{2\rho}),$ $0\leq\eta\leq1,$ $\eta\equiv1$ on $B_{\rho}$ and $|D\eta|\leq 2/\rho$.
We then use \eqref{4}, \eqref{5} and the properties of $\eta$ to get
\begin{align}\label{7n}
\int_{B_{\rho}}|Du|^m\; dx \leq& C\int_{B_{2\rho}} \varphi^\frac{m}{m-1}\; dx + C\underbrace{\int_{B_{2\rho}} |u|^{{m^*}}\; dx}_{J_1} +C\underbrace{\int_{B_{2\rho}}\eta^m|u|\; d\nu_0}_{J_2} \\
\nonumber
&\quad+C\underbrace{\int_{B_{2\rho}}\varphi|u||D\eta|\; dx}_{J_3} +C\underbrace{\int_{B_{2\rho}}|u|^{\frac{{m^*}(m-1)}{m}+1}|D\eta|\; dx}_{J_4} \\
\nonumber
&\quad+C\underbrace{\int_{B_{2\rho}}|Du|^{m-1}|u||D\eta|\; dx}_{J_5} +C\underbrace{\int_{B_{2\rho}}|Du|^{\frac{m({m^*}-1)}{{m^*}}}|u|\; dx}_{J_6}.
\end{align}
We will estimate the terms on the right-hand side of \eqref{7n} by means of the boundary Sobolev inequality \eqref{BSob}. Let us recall at this point that the $m$-thickness condition \eqref{3} and \cite{L88} ensure existence of a number $P\in(1,m)$ such that $\mr^n\setminus\Omega$ is uniformly $P$-thick. Thus,
$$
J_1  \leq  C\rho^{{m^*}\left(n\left(\frac{1}{{m^*}}-\frac{1}{m}\right)+1\right)}\left(\int_{B_{2\rho}}|Du(x)|^m\; dx\right)^{{{m^*}}/{m}-1}\int_{B_{2\rho}}|Du(x)|^m\; dx.
$$
The term $J_2$ is estimated in the same manner as \eqref{350}. Testing $v=\eta^m|u|$ in \eqref{eqnw}, we have
$$
J_2  \leq \varepsilon \int_{B_{2\rho}}|Du|^m\; dx + C(\varepsilon) \int_{B_{2\rho}}|Dw|^m\; dx + C(\varepsilon)\rho^{n}\left(\Xint-_{B_{2\rho}}|Du(x)|^{\hm}\; dx\right)^{{m}/{\hm}}.
$$
where $\hm : = \max\left\{\frac{nm}{n+m},P \right\}.$
Using the Young inequality, \eqref{BSob} and taking into account $|D\eta|\leq 2/\rho,$ we have
\begin{align*}
J_3 \leq&\ C\int_{B_{2\rho}}|u(x)|^m|D\eta(x)|^m\; dx+ C\int_{B_{2\rho}}\varphi(x)^\frac{m}{m-1}\; dx \\
\leq&\ \frac{C}{\rho^m}\int_{B_{2\rho}}|u(x)|^m\; dx+ C\int_{B_{2\rho}}\varphi(x)^\frac{m}{m-1}\; dx \\
\leq&\ C\rho^{n}\left(\Xint-_{B_{2\rho}}|Du(x)|^{\hm}\; dx\right)^{{m}/{\hm}}+ C\int_{B_{2\rho}}\varphi(x)^\frac{m}{m-1}\; dx.
\end{align*}
Similarly,
\begin{align*}
J_4 \leq&\ \frac{C}{\rho^m}\int_{B_{2\rho}}|u(x)|^m\; dx+ C\int_{B_{2\rho}}|u(x)|^{{m^*}}\; dx \\
 \leq&\ C\rho^{n}\left(\Xint-_{B_{2\rho}}|Du(x)|^{\hm}\; dx\right)^{{m}/{\hm}}+ CJ_1
\end{align*}
and
\begin{align*}
J_5 \leq&\  \varepsilon\int_{B_{2\rho}}|Du(x)|^m\; dx +
\frac{C(\varepsilon)}{\rho^m}\int_{B_{2\rho}}|u(x)|^m\; dx\\
\leq&\ \varepsilon\int_{B_{2\rho}}|Du(x)|^m\; dx +
C(\varepsilon)\rho^{n}\left(\Xint-_{B_{2\rho}}|Du(x)|^{\hm}\; dx\right)^{{m}/{\hm}}
\end{align*}
with arbitrary $\varepsilon>0.$

Finally, the H\"older inequality and \eqref{BSob} yield
\begin{align*}
J_6 \leq&\ \left(\int_{B_{2\rho}}|u(x)|^{{m^*}}\ dx\right)^{{1}/{{m^*}}} \left(\int_{B_{2\rho}}|Du(x)|^m\; dx\right)^{1-{1}/{{m^*}}} \\
\leq&\  C\rho^{n\left(\frac{1}{{m^*}}-\frac{1}{m}\right)+1}\left(\int_{B_{2\rho}}|Du(x)|^m\; dx\right)^{{1}/{m}-{1}/{{m^*}}}\int_{B_{2\rho}}|Du(x)|^m\; dx.
\end{align*}

Using the bounds for $J_1-J_6$ in \eqref{7n} leads once again to \eqref{8} with replacing $\tilde{m}$ by $\hm : = \max\left\{\frac{nm}{n+m},P\right\}$ now. However, a careful analysis of the estimate \eqref{8} above shows that these remain valid also with $\hm : = \max\left\{\frac{nm}{n+m},P\right\}$ because of $P>1$.
Therefore, \eqref{8} holds true with $\hm : = \max\left\{\frac{nm}{n+m},P\right\}$
in the both cases considered above.

Looking at \eqref{8}, we recall that $\frac{{m^*}}{m}>1.$ Thus, thanks also to the absolute continuity of the Lebesgue integral,
we can choose $\varepsilon$ and $\rho_0$ so small that if $\rho<\rho_0$ then the multiplier of $\Xint-_{B_{2\rho}}|Du(x)|^m\; dx$ at the right-hand side of \eqref{8} becomes less than ${1}/{2}.$

To apply Proposition~\ref{GGMl}, we consider the functions
$$
G(x)=\begin{cases}
 \left(|Du(x)|^m+|u(x)|^{{m^*}}\right)^\frac{\hm}{m} & \text{if}\ x\in\Omega,\\
\hfil 
0\hfil & \text{if}\ x\notin\Omega
\end{cases}
$$
and
$$
F(x)=\begin{cases} \displaystyle
\left[C(\varepsilon)\left(\varphi^\frac{m}{m-1}+|Dw|^m\right)\right]^{\frac{\hm}{m}} & \text{if}\ x\in\Omega,\\
\hfil 0 \hfil & \text{if}\ x\notin\Omega,
\end{cases}
$$
and set $s=\frac{m}{\hm},$ $s_0=\kappa\frac{m}{\hm}$ with
$\kappa\in\left(1,\min\left\{\frac{p(m-1)}{m},\frac{m_1}{m}\right\}\right)$ where $m_1$ is the constant appeared in Lemma~\ref{w HIG}. It is worth noting that the existence of such $\kappa$ is ensured by our hypotheses
$$
p>\frac{m}{m-1},\quad  m_1>m.
$$

With these settings, the inequality \eqref{8} rewrites into
$$
\Xint-_{B_\rho}G^s(x)\; dx \leq C\left(\Xint-_{B_{2\rho}}G(x)\; dx\right)^s+\Xint-_{B_{2\rho}}F^s(x)\; dx +\frac{1}{2}\Xint-_{B_{2\rho}}G^s(x)\; dx
$$
for each ball $B_\rho$ with $\rho<\rho_0$ such that $B_{2\rho}\subset B,$
where $B$ is a large enough ball containing the bounded domain $\Omega.$

At this point Proposition~\ref{GGMl} applies to ensure existence of exponents $m_0>m$ and $m^*_0>{m^*},$ and a constant $C$ such that
$$
\|Du\|_{L^{m_0}(B_\rho)}+\|u\|_{L^{m^*_0}(B_\rho)}\leq C\quad \forall \rho<\rho_0.
$$ 
The desired estimate \eqref{9}, with a constant $C$
depending on known quantities, on $\|Du\|_{L^m(\Omega)}$ and the uniform integrability of $|Du|^m,$ then 
follows by a simple covering argument.
\end{proof}

\section{Global Essential Boundedness}\label{sec4}

\subsection{Proof of Theorem~\ref{thm1}}

Let us start with the case $1<m<n.$ Consider the measure
$$
d\m:=\left(\chi(x)+\varphi(x)^\frac{m}{m-1}+\psi(x)+|u(x)|^\frac{m^2}{n-m}\right)\;dx + d|\nu|
$$
where $\chi(x)$ is the characteristic function of the domain $\Omega,$ $dx$ is the Lebesgue measure and $\varphi$ and $\psi$ are supposed to be extended as zero outside $\Omega.$

Given a ball $B_\rho$ of radius $\rho,$ we employ the  assumptions on $\varphi$, $\psi$ and $\nu$ to get
\begin{equation*}
\begin{aligned}
\int_{B_\rho}\varphi(x)^\frac{m}{m-1}\; dx &\leq \|\varphi\|_{L^{p,\lambda}(\Omega)}^\frac{m}{m-1}\rho^{n-\frac{m(n-\lambda)}{p(m-1)}}
=\|\varphi\|^\frac{m}{m-1}_{L^{p,\lambda(\Omega)}}\rho^{n-m+\left(m-\frac{m(n-\lambda)}{p(m-1)}\right)} \\
\int_{B_\rho}\psi(x)\; dx &\leq \|\psi\|_{L^{q,\mu}(\Omega)}\rho^{n-\frac{n-\mu}{q}} = \|\psi\|_{L^{q,\mu}(\Omega)} \rho^{n-m+\left(m-\frac{n-\mu}{q}\right)}
\end{aligned}
\end{equation*}
and
\begin{equation*}
|\nu|(B_\rho) \leq \|\nu\|_{\mathcal{R}^{1,\omega}(\Omega)}\rho^{\omega}
=\|\nu\|_{\mathcal{R}^{1,\omega}(\Omega)}\rho^{n-m+\left(m-n+\omega\right)}
\end{equation*}
with $m-\frac{m(n-\lambda)}{p(m-1)}>0$, $m-\frac{n-\mu}{q}>0$ and $m-n+\omega>0$ as consequence of the hypotheses $(m-1)p+\lambda>n$, $mq+\mu>n$ and $m+\omega>n.$

Further on, $u \in L^{m^*_0}(\Omega)$ by \eqref{9} 
and therefore the H\"older inequality gives
\begin{equation}\label{11}
\int_{B_\rho}|u(x)|^\frac{m^2}{n-m}\; dx \leq \|u\|^\frac{m^2}{n-m}_{L^{m^*_0}(\Omega)} \rho^{n-m+\frac{mm^*_0(n-m)-nm^2}{m^*_0(n-m)}}
\end{equation}
with $\frac{mm^*_0(n-m)-nm^2}{m^*_0(n-m)}>0$ because of $m^*_0>{m^*}=\frac{nm}{n-m}.$

This way, setting
$$
\varepsilon_0:=\mathrm{min}\left\{m-\frac{m(n-\lambda)}{p(m-1)},m-\frac{n-\mu}{q},m-n+\omega,\frac{mm^*_0(n-m)-nm^2}{m^*_0(n-m)}\right\}>0,
$$
we get
$$
\m(B_\rho)\leq K \rho^{n-m+\varepsilon_0}
$$
with a constant $K$ depending on known quantities.

For an arbitrary $k\geq1,$ we consider now the function 
$$
v(x):=\max\{u(x)-k,0\}
$$ 
and its upper zero-level set 
$$
\Omega_k:=\big\{x\in\Omega\colon\ u(x)>k\big\}.
$$ 
It is immediate that $v\equiv0$ on $\Omega\setminus\Omega_k$ and $v\in W^{1,m}_0(\Omega).$

The H\"older inequality gives
$$
\int_\Omega v(x)\; d\m = \int_{\Omega_k} v(x)\; d\m \leq \left(\int_{\Omega_k} d\m\right)^{1-{1}/{s}}\left(\int_{\Omega_k} |v(x)|^s\; d\m\right)^{{1}/{s}},
$$
whence, applying the Adams trace inequality (Proposition~\ref{Ati}) with
$$
\alpha_0=n-m+\varepsilon_0,\quad s=\frac{m(n-m+\varepsilon_0)}{n-m},\quad r=m,
$$
we get
\begin{equation}\label{12}
\int_\Omega v(x)\; d\m \leq C (\m(\Omega_k))^{1-\frac{n-m}{m(n-m+\varepsilon_0)}}\left(\int_{\Omega_k} |Dv(x)|^m\; dx\right)^{{1}/{m}}.
\end{equation}

To estimate the $L^m(\Omega_k)$-norm of the gradient $Du$ above, we will apply \eqref{4} and \eqref{5}. For, the Young inequality implies
$$
|\xi|^{\frac{nm-n+m}{n}}|z| \leq \varepsilon |\xi|^m + C(\varepsilon)|z|^{\frac{nm}{n-m}}
$$
so that the controlled growth assumptions \eqref{4} yield
$$
b(x,z,\xi)z \leq |z||b(x,z,\xi)| \leq \Lambda\Big(\varepsilon |\xi|^m + C(\varepsilon)|z|^{\frac{nm}{n-m}} + |z|\psi(x)\Big)
$$
for a.a. $x\in\Omega,$ for all $(x,\xi)\in\mathbb{R}\times\mathbb{R}^n$ and with arbitrary $\varepsilon>0$ to be chosen later. In particular, keeping in mind
$$
0<\frac{u(x)-k}{u(x)}<1\quad \textrm{a.e.}\ \Omega_k, 
$$
we have
\begin{align}\label{13}
|b(x,u(x),Du(x))v(x)|=&\ |b(x,u(x),Du(x))u(x)|\frac{u(x)-k}{u(x)} \\
\nonumber
\leq&\ \Lambda\Big(\varepsilon |Du(x)|^m + C(\varepsilon)|u(x)|^{\frac{nm}{n-m}} + |u(x)|\psi(x)\Big)
\end{align}
for a.a. $x\in\Omega_k.$

At this point, we employ $v\in W^{1,m}_0(\Omega)$ as test function in \eqref{6} and use $v\equiv0$ on $\Omega\setminus\Omega_k,$
$|Dv|=|Du|$ a.e. $\Omega_k$ and \eqref{5}, in order to conclude that
\begin{align}\label{14}
\int_{\Omega_k}|Dv(x)|^m\; dx \leq&\ C \Bigg(
\underbrace{\int_{\Omega_k}\varphi(x)^\frac{m}{m-1} \; dx}_{I_1} + \underbrace{\int_{\Omega_k} |u(x)|\psi(x) \; dx}_{I_2}\\
\nonumber &\qquad
+ \underbrace{\int_{\Omega_k} |u(x)| \; d|\nu|}_{I_3} + \underbrace{\int_{\Omega_k} |u(x)|^{\frac{nm}{n-m}}\ dx}_{I_4}\Bigg)
\end{align}
after choosing appropriately $\varepsilon$ in \eqref{13}.

It is immediate that
\begin{equation}\label{15}
I_1 \leq \m(\Omega_k).
\end{equation}

Further on, we have
\begin{equation}\label{16}
I_2 = \int_{\Omega_k}|u(x)-k+k|\psi(x)\; dx \leq \int_{\Omega_k}v(x)\psi(x)\; dx + k \int_{\Omega_k}\psi(x)\; dx,
\end{equation}
and similarily
\begin{equation}\label{16-1}
I_3 = \int_{\Omega_k}|u(x)-k+k|\; d|\nu| \leq \int_{\Omega_k}v(x)\; d|\nu| + k |\nu|(\Omega_k).
\end{equation}
Combining \eqref{16} and \eqref{16-1} we discover 
\begin{align}\label{16-2}
I_2+I_3 &\leq \int_{\Omega_k}v(x)\psi(x)\; dx + \int_{\Omega_k}v(x)\; d|\nu| + k \left(\int_{\Omega_k}\psi(x)\; dx+|\nu|(\Omega_k)\right) \\
\nonumber&\leq \int_{\Omega_k}v(x)\psi(x)\; dx + \int_{\Omega_k}v(x)\; d|\nu| + k \m(\Omega_k).
\end{align}

To estimate the first and second term on the right-hand side above,
define the measure $d\overline\m:=\psi(x)dx + d|\nu|$. 
We have 
$$\overline\m(B_\rho) \leq \m(B_\rho) \leq C\rho^{\varepsilon_0}$$ 
for each ball $B_\rho$ and therefore Proposition~\ref{Ati} can be applied with 
$$\alpha_0=n-m+\varepsilon_0,\quad s=\frac{m(n-m+\varepsilon_0)}{n-m},\quad r=m.$$
 Namely,
\begin{align*}
\int_{\Omega_k} v(x)\; d\overline\m &\leq \left(\int_{\Omega_k}\; d\overline\m\right)^{1-1/s}\left(\int_{\Omega_k} |v(x)|^s\; d\overline\m\right)^{1/s} \\
&\leq C\big(\overline\m(\Omega_k)\big)^{1-1/s}\left(\int_{\Omega_k} |Dv(x)|^m\; dx \right)^{1/m}.
\end{align*}
We use the Young inequality to estimate the last term above by
$$
\varepsilon \int_{\Omega_k} |Dv(x)|^m\; dx + C(\varepsilon)\big(\overline\m(\Omega_k)\big)^{\frac{m}{m-1}\frac{s-1}{s}}
$$
with arbitrary $\varepsilon>0.$ Moreover,
$$
\big(\overline\m(\Omega_k)\big)^{\frac{m}{m-1}\frac{s-1}{s}} \leq 
\big(\m(\Omega_k)\big)^{\frac{m}{m-1}\frac{s-1}{s}} \leq 
\m(\Omega_k)\big(\m(\Omega)\big)^{\frac{m}{m-1}\frac{s-1}{s}-1}
$$
and 
\begin{align*}
\m(\Omega)=&\ \int_\Omega \left(1 + \varphi(x)^{\frac{m}{m-1}} + \psi(x) + |u(x)|^{\frac{m^2}{n-m}}\right)\;dx + |\nu|(\Omega)\\
\leq&\ |\Omega| +C \left(
\|\varphi\|_{L^{p,\lambda}(\Omega)}^{\frac{m}{m-1}}+
\|\psi\|_{L^{q,\mu}(\Omega)}+
\|u\|_{L^{m^*_0}(\Omega)}^{\frac{m^2}{n-m}} + \|\nu\|_{\mathcal{R}^{1,\omega}(\Omega)} \right)
\end{align*}
where $C>0$ depends only on $n$, $m$, $p$, $q$, $\lambda$, $\mu$, $\omega$ and $\text{diam\,}\Omega$.
Thus, remembering \eqref{9}, $\m(\Omega)$ is bounded in terms of known quantities and $\|Du\|_{L^m(\Omega)},$ whence 
\begin{equation}\label{17}
I_2 + I_3 \leq \varepsilon  \int_{\Omega_k} |Dv(x)|^m\; dx + C(\varepsilon) k \m(\Omega_k)
\end{equation}
with arbitrary $\varepsilon>0.$

In the same manner we estimate also the last term $I_4$ of \eqref{14}. Precisely,
\begin{align*}
I_4 =&\ \int_{\Omega_k} |u(x)-k+k|^m|u(x)|^{\frac{m^2}{n-m}}\; dx \\
\leq&\ 2^{m-1}\left(\int_{\Omega_k} v^m(x)|u(x)|^{\frac{m^2}{n-m}}\; dx + k^m \int_{\Omega_k} |u(x)|^{\frac{m^2}{n-m}}\; dx \right) \\
\leq&\ 2^{m-1} \int_{\Omega_k} v^m(x)|u(x)|^{\frac{m^2}{n-m}}\; dx + 2^{m-1}k^m  \m(\Omega_k).
\end{align*}
We will estimate the first term above with the aid of the Adams trace inequality. For this goal, note that \eqref{11} implies $|u|^{\frac{m^2}{n-m}} \in L^{1,\theta}(\Omega)$ with
$$
\theta = n - m + \frac{mm^*_0(n-m)-nm^2}{m^*_0(n-m)}>n-m.
$$
Therefore, there exists an $r'<m,$ close enough to $m,$ and such that 
$$
n-m<\frac{m}{r'}(n-r')<\theta.
$$
We have then
$$
n-r'+\frac{(n-r')(m-r')}{r'} < \theta
$$
and Proposition~\ref{MSa} yields
$|u|^{\frac{m^2}{n-m}} \in L^{1,n-r'+\frac{(n-r')(m-r')}{r'}}(\Omega).$ This way, Proposition~\ref{Ati} and the H\"older inequality give 
\begin{align*}
\int_{\Omega_k} v^m(x) |u(x)|^{\frac{m^2}{n-m}}\; dx \leq&\ C\left(\int_{\Omega_k} |Dv(x)|^{r'}\; dx\right)^{m/r'}\\
 \leq&\ C|\Omega_k|^{\frac{m}{r'}-1}\left(\int_{\Omega_k}|Dv(x)|^m \; dx\right)
\end{align*}
with $C$ depending also on $\left\||u|^{\frac{m^2}{n-m}}\right\|_{L^{1,\theta}(\Omega)}$ which is bounded in terms of $\|u\|_{L^{m^*_0}(\Omega)}$ (cf. \eqref{11} and \eqref{9}). Therefore,
\begin{equation}\label{18}
I_4 \leq C\left(|\Omega_k|^{\frac{m}{r'}-1}\int_{\Omega_k}|Dv(x)|^m \; dx + k^m  \m(\Omega_k) \right)
\end{equation}
and putting \eqref{15}, \eqref{17} and \eqref{18} together, \eqref{14} takes on the form
\begin{equation}\label{19}
\int_{\Omega_k} |Dv(x)|^m \; dx \leq C\left(|\Omega_k|^{\frac{m}{r'}-1}\int_{\Omega_k}|Dv(x)|^m \; dx + k^m  \m(\Omega_k) \right)
\end{equation}
after choosing $\varepsilon>0$ small enough and remembering $k\geq1.$

We have further
$$
k^\frac{nm}{n-m}|\Omega_k| \leq \int_{\Omega_k} |u(x)|^\frac{nm}{n-m}\; dx \leq \int_\Omega |u(x)|^\frac{nm}{n-m}\; dx \leq C\|Du\|_{L^m(\Omega)}^\frac{nm}{n-m},
$$
and this means that if $k \geq k_0$ for large enough $k_0,$
depending on known quantities and on $\|Du\|_{L^m(\Omega)},$
 then the multiplier factor $C |\Omega_k|^{\frac{m}{r'}-1}$ on the right-hand side of \eqref{19} can be made less than $1/2.$ This way
\begin{equation}\label{19'}
\int_{\Omega_k} |Dv(x)|^m \; dx \leq Ck^m  \m(\Omega_k)\qquad\forall\ k \geq k_0
\end{equation}
and then \eqref{12} becomes
\begin{equation}\label{20}
 \int_{\Omega_k} v(x)\; d\m \leq Ck\big(\m(\Omega_k)\big)^{1+\frac{\varepsilon_0}{m(n-m+\varepsilon_0)}}\qquad \forall\ k \geq k_0 
\end{equation}
with $\varepsilon_0>0.$

Employing the Cavalieri principle, we have
$$
\int_{\Omega_k} v(x)\; d\m= \int_{\Omega_k}(u(x)-k)\; d\m=\int_k^\infty \m(\Omega_t)\; dt
$$
and the setting $\zeta(t):=\m(\Omega_t)$ rewrites \eqref{20} into
$$
\int_k^\infty \zeta(t)\; dt \leq Ck\zeta(k)^{1+\delta}\qquad \forall k\geq k_0,\ \delta=\frac{\varepsilon_0}{m(n-m+\varepsilon_0)}>0.
$$
It remains to apply the Hartman--Stampacchia maximum principle (Proposition~\ref{HSMp}) to conclude
$$
u(x) \leq k_{\max}\qquad \mathrm{a.e.}\ \Omega
$$
where $k_{\max}$ depends on known quantities and on  $\|Du\|_{L^m(\Omega)}$ in addition.

Repeating the above procedure with $-u(x)$ instead of $u(x),$ we get a bound from below for $u(x)$ which gives the desired estimate \eqref{10} when $m<n.$

\medskip

The claim of Theorem~\ref{thm1} in the limit case $m=n$ can be easily obtained by adapting the above procedure to the new situation. Precisely, the controlled growth condition \eqref{4} for the term $b(x,z,\xi)$ and the coercivity condition \eqref{5} have now the form
\begin{align}
\label{m=nb}
|b(x,z,\xi)| \leq&\ \Lambda\left(\psi(x)+|z|^{{m^*}-1}+|\xi|^{\frac{n({m^*}-1)}{{m^*}}}\right),\\
\label{m=na}
\ba(x,z,\xi)\cdot\xi\geq&\ \gamma|\xi|^{n}-\Lambda|z|^{{m^*}}-\Lambda\varphi(x)^\frac{n}{n-1},
\end{align}
respectively, where ${m^*}>n$ is an \textit{arbitrary} exponent,
$\varphi\in L^{p,\lambda}(\Omega)$ with $p>\frac{n}{n-1},$ $\lambda\in(0,n)$ and $(n-1)p+\lambda>n,$ and $\psi\in L^{q,\mu}(\Omega)$ with $q\geq 1$ and $\mu\in(0,n)$.

Without loss of generality, we may choose a number $m'<n,$ close enough to $n,$ and such that ${m^*}=\frac{n^2}{(n-m')(n+1)}.$ 
Setting ${(m')^*}=\frac{nm'}{n-m'},$ we have
$$
{m^*}<{(m')^*},\quad \frac{n({m^*}-1)}{{m^*}}=\frac{m'({(m')^*}-1)}{(m')^*}
$$
and therefore \eqref{m=nb} becomes
\begin{equation}\label{m=nb'}
|b(x,z,\xi)|  \leq \Lambda\left(\psi(x)+|z|^{(m')^*-1}+|\xi|^{\frac{m'((m')^*-1)}{(m')^*}}\right)
\end{equation}
for $|z|\geq 1$ and $|\xi|\geq1,$ while \eqref{m=na} takes on the form
\begin{align}\label{m=na'}
\ba(x,z,\xi)\cdot\xi\geq&\ \gamma|\xi|^{n}-\Lambda|z|^{{m^*}}-\Lambda\varphi(x)^\frac{n}{n-1}\\
\nonumber
   \geq&\ \gamma|\xi|^{m'}-\Lambda|z|^{(m')^*}-\Lambda\varphi(x)^\frac{m'}{m'-1}
\end{align}
when $|z|\geq 1$ and $|\xi|\geq1$ and where, without loss of generality, we have supposed $\varphi(x)\geq1.$

Defined now the measure
$$
d\m'=\left(\chi(x)+\varphi(x)^\frac{m'}{m'-1}+\psi(x)+|u(x)|^\frac{m'^2}{n-m'}\right)\;dx + d|\nu|,
$$
we may increase, if necessary, the value of $m',$ maintaining  it anyway less than $n,$ in order to have
$p>\frac{m'}{m'-1},$ $(m'-1)p+\lambda>n$ and $m'q+\mu>n$, and therefore
$$
\m'(B_\rho)\leq K \rho^{n-m'+\varepsilon_0}
$$
as above, with a suitable $\varepsilon_0>0.$

Considering the function $v(x)$ and the sets $\Omega_k$ as defined before, it is immediate that
$$
\int_{\{x\in\Omega_k\colon |Dv(x)|<1\}} |Dv(x)|^{m'}\;dx\leq |\Omega_k|\leq k^{m'}\m'(\Omega_k),
$$
while 
$$
\int_{\{x\in\Omega_k\colon |Dv(x)|\geq1\}} |Dv(x)|^{m'}\;dx
$$
can be estimated with the aid of \eqref{m=nb'} and \eqref{m=na'}, as already did when $1<m<n.$ That leads to the bound \eqref{19'} with $m'$ instead of $m$ and it remains to run the same procedure employed above in order to complete the proof of Theorem~\ref{thm1}.\hfill $\qed$

\subsection{Sharpness of the Hypotheses}

We will show, on the level of simple examples built on the $m$-Laplace operator, that the restrictions on the growths with respect to $u$ and $Du$ and on the Sobolev--Morrey exponents as asked in \eqref{4} and \eqref{5} are \textit{sharp} in order to have essential boundedness of the weak solutions to \eqref{1}. 

\begin{example}\label{ex1}\em (The $|u|$-growth ${m^*}-1$ of $b(x,u,Du)$ is \textit{optimal} for the boundedness.)
Let $\varkappa>{m^*}-1>m-1.$ The function 
$$
u(x):= |x|^{\frac{m}{m-\varkappa-1}} \in W^{1,m}
$$
is a local weak solution of the equation
$$
\mathrm{div\,}\left(|Du|^{m-2}Du\right)=C(n,m,\varkappa)|u|^\varkappa
$$
in the unit ball $B_1(0),$ but $u\notin L^\infty(B_1).$
\end{example}

\begin{example}\label{ex2}
\em (The gradient growth $\frac{m({m^*}-1)}{{m^*}}$ of $b(x,u,Du)$ is \textit{optimal} for the boundedness.)
Let $m<n$ and $\varkappa\in \left(\frac{m({m^*}-1)}{{m^*}},m\right).$ The function 
$$
u(x):= |x|^{\frac{m-\varkappa}{m-\varkappa-1}}-1 
$$
is a $W^{1,m}_0(B_1)$ weak solution of the Dirichlet problem for the equation
$$
\mathrm{div\,}\left(|Du|^{m-2}Du\right)=C(n,m,\varkappa)|Du|^\varkappa,
$$
but $u\notin L^\infty(B_1).$
\end{example}

\begin{example}\label{ex3}
\em
(The requirements $\varphi\in L^{p,\lambda}(\Omega)$ with  $(m-1)p+\lambda>n$ and $\psi\in L^{q,\mu}(\Omega)$ with $mq+\mu>n$ are \textit{sharp} for the boundedness.)

Let $B_R=\left\{x\in \mr^n\colon\ |x|<R<1\right\}$ and consider the functions
$$
\varphi(x):=
\frac{x}{|x|^m\big|\log|x|\big|^{m-1}}
$$
and
$$
\psi(x):=\frac{(m-n)\log|x|-m+1}{|x|^m|\log|x||^{m}}.
$$
It is immediate to check that $\varphi\in L^{\frac{n}{m-1}}\left(B_R;\mr^n\right)\subset L^{p',n-(m-1)p'}$ $\forall p'\in \left(1,\frac{n}{m-1}\right]$ but
$\varphi\notin  L^{p',n-(m-1)p'+\varepsilon}\left(B_R;\mr^n\right)$ $ \forall\varepsilon>0;$ and
$\psi\in L^{\frac{n}{m}}(B_R)\subset L^{q',n-mq'}(B_R)$ $ 
\forall q'\in \left[1,\frac{n}{m}\right],$ but
$\psi\notin  L^{q',n-mq'+\varepsilon}(B_R)$ $\forall\varepsilon>0.$
  
The \textit{unbounded} function
$$
u(x)=\log\left(\frac{\log|x|}{\log R}\right)
$$
is a $W^{1,m}_0(B_R)$-weak solution to the homogeneous Dirichlet problems of both 
$$
\mathrm{div\,}\left(|Du|^{m-2}Du-\varphi(x)\right)=0
$$
and
$$
\mathrm{div\,}\left(|Du|^{m-2}Du\right)=\psi(x).
$$
\end{example}

\section{Global H\"older Continuity}\label{sec5}

Let us start with the H\"older regularity of the weak solutions in the \textit{interior} of $\Omega$ as claimed in 
Corollary~\ref{crl1}.

\begin{proof}[Proof of Corollary~\ref{crl1}] Let $m=n.$ Then Lemma~\ref{lemHIG} implies $u\in W^{1,m_0}(\Omega)$ with $m_0>n$ and thus the interior H\"older continuity of $u$ with exponent $1-\frac{n}{m_0}$ follows from the Morrey lemma.

Suppose therefore $m<n.$ We have then ${m^*}=\frac{nm}{n-m}$  and, taking into account the essential boundedness of $u$ given by Theorem~\ref{thm1}, the structure conditions \eqref{4} and \eqref{5} can be rewritten as
\begin{align*}
|\ba(x,z,\xi)| \leq&\ \Lambda\left(\varphi'(x)+|\xi|^{m-1}\right),\\[4pt]
|b(x,z,\xi)| \leq&\ \Lambda\left(\psi'(x)+|\xi|^{\frac{m({m^*}-1)}{{m^*}}}\right) \leq \Lambda \left(\psi'(x)+|\xi|^{m}\right),\\
\ba(x,z,\xi)\cdot\xi\geq&\ \gamma|\xi|^m-\varphi''(x)
\end{align*}
for a.a. $x\in\Omega$ and all $(z,\xi)\in\mr\times\mr^n,$ where 
$$\varphi'(x)=\varphi(x)+M^{\frac{n(m-1)}{n-m}},\qquad \psi'(x)=\psi(x)+M^{\frac{nm-n+m}{n-m}}+1,
$$
$$
\varphi''(x)=\Lambda\left(M^{\frac{nm}{n-m}}+\varphi(x)^{\frac{m}{m-1}}\right).
$$

Straightforward calculations, based on the hypotheses $\varphi\in L^{p,\lambda}(\Omega),$  $p>\frac{m}{m-1},$ $(m-1)p+\lambda>n$ and $\psi\in L^{q,\mu}(\Omega),$ $q\geq1$, $mq+\mu>n,$ give
$$
\int_{B_\rho}\varphi'(x)\; dx \leq C\left(n,m,\|\varphi\|_{L^{p,\lambda}(\Omega)},M,\text{\rm diam\,}\Omega\right)
\rho^{n-\frac{n-\lambda}{p}}=C 
\rho^{n-m+1+\varepsilon_1}
$$
with $\varepsilon_1=m-1-\frac{n-\lambda}{p}>0,$
$$
\int_{B_\rho}\psi'(x)\; dx \leq C\left(n,m,\|\psi\|_{L^{q,\mu}(\Omega)},M,\text{\rm diam\,}\Omega\right)\rho^{n-\frac{n-\mu}{q}}
=C\rho^{n-m+\varepsilon_2}
$$
with $\varepsilon_2=m-\frac{n-\mu}{q}>0,$
$$
\int_{B_\rho}\varphi''(x)\; dx \leq 
C\left(n,m,\Lambda,\|\varphi\|_{L^{p,\lambda}(\Omega)},M,\text{\rm diam\,}\Omega\right)
\rho^{n-\frac{m(n-\lambda)}{p(m-1)}}=C
\rho^{n-m+\varepsilon_3}
$$
with $\varepsilon_3=m-\frac{m(n-\lambda)}{p(m-1)}>0,$ and
\begin{equation*}
|\nu|(B_\rho) \leq \|\nu\|_{\mathcal{R}^{1,\omega}(\Omega)}\rho^{n-m+\left(m-n+\omega\right)} = C \rho^{n-m+\varepsilon_4}
\end{equation*}
with $\varepsilon_4=m-n+\omega>0$.

At this point, the claim of Corollary~\ref{crl1} follows from the Harnack inequality proved by Lieberman (see \cite[Theorem~4.1]{L} and \cite[Theorem~2.2]{Tr}) and standard covering arguments.
\end{proof}

To proceed further with the more delicate question of H\"older continuity \textit{up to the boundary} of $\Omega,$ we need the following result ensuring suitable growth estimate for the gradient over small balls.
\begin{lemma}\label{lemM}
Assume \eqref{4}, \eqref{5}, \eqref{2} and \eqref{2-1}, and
let $u$ be a weak solution to the problem \eqref{1} extended az zero outside $\Omega.$ 
Let $B_\rho$ be a ball of radius 
$\rho\in(0,\mathrm{diam}\,\Omega)$ and centered at a point of $\partial\Omega,$ and $\eta \in C^\infty_0(B_{\rho/2})$ with $|D\eta|\leq c/\rho.$ 
Define $\tilde{u}(x):=\max\{u(x),0\}$, $M(\rho):=\esssup_{x \in B_\rho} \tilde{u}(x)$, $A(\rho) : = \rho  + \|\nu\|^\frac{1}{m}_{\mathcal{R}^{1,\omega}(B_\rho)} + {\|\varphi\|}_{L^{p,\lambda}(B_\rho)}^\frac{1}{m-1} + {\|\psi\|}_{L^{q,\mu}(B_\rho)}^\frac{1}{m}$ and $w(x)^{-1}:=M(\rho)+A(\rho)-\tilde{u}(x).$

Then, there exists a constant $C$ depending on the same quantities as $M$ in \eqref{10}, such that
$$
\int_{B_{\rho/2}} {|D(\eta w^{-1})|}^m \;dx\leq  C\big(M(\rho)+A(\rho)\big){\left(M(\rho)-M\left(\frac{\rho}{2}\right)+A(\rho)\right)}^{m-1}\rho^{n-m}.
$$
\end{lemma}
\begin{proof}
Given a ball $B_\rho$ of radius $\rho$ centered at a boundary point, let us consider the measure
$$
d\m:=\chi(x)\left(\varphi(x)^\frac{m}{m-1}+\psi(x)\right)dx + \chi(x) d|\nu|
$$
where, as before, $\chi(x)$ is the characteristic function of the domain $\Omega\cap B_\rho,$ $dx$ is the Lebesgue measure and $\varphi$, $\psi$ and $\nu$ are supposed to be extended as zero outside $\Omega.$

For an arbitrary ball $B_r(y)$ of radius $r$ and center $y\in \mr^n$, we have
\begin{equation*}
\begin{aligned}
\int_{B_r(y)}\chi(x)\varphi(x)^\frac{m}{m-1}\; dx &\leq \|\varphi\|_{L^{p,\lambda}(B_\rho)}^\frac{m}{m-1}r^{n-\frac{ m(n-\lambda)}{p(m-1)}}
\leq A(\rho)^{ m}r^{n-\frac{m(n-\lambda)}{p(m-1)}}, \\
\int_{B_r(y)}\chi(x)\psi(x)\; dx & \leq \|\psi\|_{L^{q,\mu}(B_\rho)} r^{n-\frac{n-\mu}{q}} \leq A(\rho)^m r^{n-\frac{n-\mu}{q}}
\end{aligned}
\end{equation*}
and
\begin{equation*}
\int_{B_r(y)}\chi(x) d|\nu| \leq \|\nu\|_{\mathcal{R}^{1,\omega}(B_\rho)}r^{\omega} \leq A(\rho)^{m}r^{\omega}.
\end{equation*}

Therefore, setting
\begin{equation}\label{alpha}
\alpha := \min\left\{n-\frac{m(n-\lambda)}{p(m-1)}, n-\frac{n-\mu}{q}, \omega \right\}>n-m, 
\end{equation}
we get 
\begin{equation}\label{M_est1}
\m(B_r(y)) \leq C A(\rho)^{m}r^{\alpha}
\end{equation}
with a constant $C$ depending on known quantities. In particular, we have
\begin{equation}\label{M_est2} 
\m(B_\rho) \leq C A(\rho)^{m}\rho^{\alpha}.
\end{equation}

Now, we use $v := \eta^m e^{\frac{\Lambda}{\gamma}\tilde{u}}v_0$ as test function 
in \eqref{6},  where $\eta \in C^\infty_0(B_{\rho}),$  $v_0 = w^\beta - (M(\rho)+A(\rho))^{-\beta}$ and $\beta>0$ is a parameter under control. Having in mind that $|D\tilde{u}|=|Du|$ a.e. $\{x\in\Omega\colon u(x)>0\},$ we obtain
\begin{align*}
0= & \int_{E} \eta^me^{\frac{\Lambda}{\gamma}\tilde{u}}\left(\beta w^{\beta+1}+\frac{\Lambda}{\gamma}v_0\right)\ba(x,u(x),Du(x))\cdot Du\;dx\\
&\quad
 + \int_{E} m\eta^{m-1}e^{\frac{\Lambda}{\gamma}\tilde{u}}v_0\ \ba(x,u(x),Du(x)) \cdot D\eta\;dx 
\\ 
&\quad  + \int_{E} \eta^m e^{\frac{\Lambda}{\gamma}\tilde{u}} b(x,u(x),Du(x))v_0\;dx + \int_{E} \eta^m e^{\frac{\Lambda}{\gamma}\tilde{u}}v_0\;d\nu 
\end{align*}
where $E:=B_\rho\cap\{x\in\Omega\colon u(x)>0\}.$
Using that
$$
|Du|^\frac{m({m^*}-1)}{{m^*}} \leq  |Du|^m + 1
$$
in view of the Young inequality, we get 
\begin{align*}
\gamma\beta\int_E\! \eta^m e^{\frac{\Lambda}{\gamma}\tilde{u}} w^{\beta+1}|Du|^m\; dx \leq&\ \Lambda m\int_E\! \eta^{m-1}e^{\frac{\Lambda}{\gamma}\tilde{u}}v_0\! \left(|Du|^{m-1}+|u|^\frac{{m^*}(m-1)}{m}+\varphi\right)\!|D\eta|\; dx\\
 &\ +\Lambda\int_E \eta^m  e^{\frac{\Lambda}{\gamma}\tilde{u}} v_0 \left(|u|^{{m^*}-1} + \psi + 1\right)\; dx\\
&\ +\Lambda\int_E \eta^m e^{\frac{\Lambda}{\gamma}\tilde{u}}\left(\beta w^{\beta+1} + \frac{\Lambda}{\gamma}v_0 \right)\left(|u|^{m^*}+\varphi^\frac{m}{m-1}\right)\; dx \\
 &\ +\int_E \eta^m  e^{\frac{\Lambda}{\gamma}\tilde{u}} v_0 \; d|\nu|
\end{align*}
as consequence of \eqref{4} and \eqref{5}.
Since $v_0\leq w^\beta$, $|\tilde{u}| \leq |u| \leq M$ and 
$$
w^{-1}\leq C\left(\|u\|_{L^\infty(\Omega)}+{\|\varphi\|}_{L^{p,\lambda}(\Omega)}^\frac{1}{m-1} + {\|\psi\|}_{L^{q,\mu}(\Omega)}^\frac{1}{m}+{\|\nu\|}_{\mathcal{R}^{1,\omega}(\Omega)}^\frac{1}{m}+\mathrm{diam}\, \Omega\right),
$$ 
it follows
\begin{align}\label{30}
\beta\int_E \eta^m w^{\beta+1}|Du|^m\; dx \leq&\ C\int_E \eta^{m-1} w^\beta |Du|^{m-1}|D\eta|\; dx\\
\nonumber
&\ +C\int_E \eta^{m-1} w^\beta \left(\varphi+1\right)|D\eta|\; dx\\
\nonumber
&\ +C\int_E \eta^m w^{\beta+1} \left(\psi + 1\right)\; dx+C\int \eta^m w^{\beta+1} \; d|\nu|\\ 
\nonumber
&\ + C(1+\beta)\int_E \eta^m w^{\beta+1}\left(\varphi^\frac{m}{m-1} + 1\right)\; dx
\end{align}
where the constant $C$ depends on known quantities and on $\|Du\|_{L^m(\Omega)}$ through $M$ in \eqref{10}.
We then apply the Young inequality to the first and second term in the right-hand side of \eqref{30} to get
\begin{align*}
\int_E \eta^{m-1} w^\beta|Du|^{m-1}|D\eta|\; dx \leq&\ \varepsilon \int_E \eta^m w^{\beta+1}|Du|^m\; dx\\
 &\ + C\varepsilon^{1-m}\int_E w^{\beta-m+1}|D\eta|^m\; dx,\\[4pt]
\int_E \eta^{m-1} w^\beta \left(\varphi+1\right)|D\eta|\; dx \leq&\ \beta \int_E \eta^mw^{\beta+1}\left(\varphi^\frac{m}{m-1}+1\right)\; dx\\
&\ + C\beta^{1-m}\int_E w^{\beta-m+1}|D\eta|^m\; dx
\end{align*}
for any $\varepsilon>0.$ Choosing $\varepsilon=\frac{\beta}{2C}$ with appropriate $C$ above, we obtain from \eqref{30}
\begin{align}\label{31}
\beta\int_E \eta^m w^{\beta+1}|Du|^m\; dx  \leq&\ C(1+\beta)\left(\int_E \eta^m w^{\beta+1}\; d\m+ \int_E \eta^m w^{\beta+1}\; dx \right) \\
\nonumber
&\qquad + C\beta^{1-m}\int_E w^{\beta-m+1}|D\eta|^m\; dx.
\end{align}

Take now $\beta=m-1$ in \eqref{31}. We have $M(\rho)-\overline{u}\geq0$ whence $w\leq A(\rho)^{-1}$
and the Poincar\'e inequality yields
$$
\int \eta^m w^m\; dx \leq A(\rho)^{-m} \int \eta^m\; dx \leq C \int |D\eta|^m\; dx
$$
and
$$
\int_E \eta^m w^m d\m \leq {A(\rho)}^{-m} \int_E \eta^m d\m.
$$
To estimate the term on the right-hand side above, we will distinguish between the cases $1<m<n$ and $m=n.$ Thus, if $1<m<n$, we apply the Adams trace
inequality from Proposition~\ref{Ati} with
$\alpha_0=\alpha$, $s=\frac{\alpha m}{n-m}$ and $r=m$, and where $\alpha$ is taken from \eqref{alpha}. Keeping in mind \eqref{M_est1} and \eqref{M_est2}, we have
\begin{align*}
{A(\rho)}^{-m} \int_E \eta^m d\m \leq&\ 
 {A(\rho)}^{-m} \left(\int_E \eta^s d\m\right)^{{m}/{s}}\left(\int_E d\m\right)^{1-{m}/{s}} \\
\leq&\ {A(\rho)}^{-m}\left(C{A(\rho)}^\frac{m^2}{s}\int_E |D\eta|^m\; dx\right) \big(C{A(\rho)}^m\rho^{\alpha}\big)^{1-{m}/{s}}\\
=&\ C\rho^{\alpha-n+m}\int_E |D\eta|^m\; dx
\leq C\int_E |D\eta|^m\; dx,
\end{align*}
where $\alpha-n+m>0$ and $0<\rho<\mathrm{diam}\,\Omega$ have been used in the last bound.

If instead $m=n,$ we employ once again
Proposition~\ref{Ati}, but 
$\alpha_0=\alpha,$ $s=n$ and $r=\frac{n^2}{\alpha+n}$ now. Thus
\begin{align*}
{A(\rho)}^{-n} \int_E \eta^n d\m 
\leq&\ C \left(\int_E |D\eta|^r\; dx\right)^{{n}/{r}} 
\leq C \left(\int_E |D\eta|^n\; dx\right) \left(\int_E \; dx \right)^{{n}/{r}-1}\\
\leq&\ C\rho^{\alpha}\int_E |D\eta|^n\; dx \leq C\int_E |D\eta|^n\; dx
\end{align*}
thanks to $\alpha>0$, $0<\rho<\mathrm{diam}\,\Omega$ and $|E|\leq C \rho^n$, and \eqref{M_est1}.  

This way, 
$$
\int_E \eta^m w^m \; d\m  \leq C\int_E |D\eta|^m\; dx
$$
and \eqref{31} with $\beta=m-1$ becomes
$$
\int_E \eta^m|D(\log w)|^m\; dx \leq C\int_E |D\eta|^m\; dx
$$
for each $0 \leq \eta \in C^\infty_0(B_{\rho}).$ 

Choosing appropriately $\eta,$ we are in a position to apply Proposition~\ref{Pj} that asserts existence of constants $C$ and $\sigma_0>$ such that
\begin{equation}\label{32}
\int_{B_{3\rho/4}} w^{-\sigma}\;dx \int_{B_{3\rho/4}} w^\sigma\;dx \leq C\rho^{2n}\qquad \text{for all}\ |\sigma|\leq \sigma_0.
\end{equation}

Consider now the cases $\beta\neq m-1$ in \eqref{31}. For, we multiply the both sides of \eqref{31} by $\beta^{m-1}$ which rewrites it as
\begin{align}\label{31'}
\beta^m \int_E \eta^m w^{\beta+1} |Du|^m\;dx \leq&\  C(1+\beta^m) \left(\int_E \eta^m w^{\beta+1}\; d\m+ \int_E \eta^m w^{\beta+1}\; dx \right) \\
\nonumber
&\qquad + C \int_E w^{\beta-m+1} |D\eta|^m\;dx.
\end{align}
Setting $\beta=mt + m -1>0$, we have 
$$
|D(\eta w^t)|^m \leq 2^{m-1}\left( w^{mt}|D\eta|^m+|t|^m\eta^m w^{mt+m} |Du|^m\right)
$$
and the use of \eqref{31'} with $\beta=mt + m -1$
gives
\begin{align*}
\int_E |D(\eta w^t)|^m\; dx &\leq\  C\left(1+
\dfrac{|t|^m}{(mt+m-1)^m}\right)\int_E
w^{mt}|D\eta|^m\; dx\\
&\qquad + C\left(|t|^m+
\dfrac{|t|^m}{(mt+m-1)^m}\right)
\int_E \eta^m w^{mt+m}\; d\m\\
&\qquad + C\left(|t|^m+
\dfrac{|t|^m}{(mt+m-1)^m}\right)
\int_E \eta^m w^{mt+m}\; dx.
\end{align*}

We have
$$
\dfrac{t}{mt+m-1}<\dfrac{1}{m}\quad \forall t>0,
$$
while $|t|^m<\left(\frac{m-1}{m}\right)^m$ and $\frac{|t|^m}{(mt+m-1)^m}$ is a positive and decreasing function
whenever $t\in\left(\frac{1-m}{m},0\right).$ 

Thus, defining
$$
N(t):=
\begin{cases}
1+t^m & \text{if}\ t>0,\\
1+\dfrac{|t|^m}{(mt+m-1)^m} & \text{if}\ \dfrac{1-m}{m}<t\leq0,
\end{cases}
$$
the last bound takes on the form
\begin{equation}\label{33}
\begin{aligned}
\int_E |D(\eta w^t)|^m\; dx  \leq   C N(t)\bigg(\int_E (\eta w^{t+1})^m\; d\m  + \int_E w^{mt}(\eta w+|D\eta|)^m\; dx \bigg).
\end{aligned}
\end{equation}

 In order to estimate the first term on the right-hand side of \eqref{33} we will employ once again Proposition~\ref{Ati}.
Since $\frac{mn}{m+\alpha}<m$, we can take $r_0>1$ such that $\max\left\{1,\frac{mn}{m+\alpha}\right\}<r_0<m$. Taking
$\alpha_0=\alpha,$ $s=\frac{\alpha r_0}{n-r_0}$ and $r=r_0$ in Proposition~\ref{Ati}, remembering $w\leq A(\rho)^{-1}$, \eqref{M_est2} and noticing that
$$
s=\frac{\alpha r}{n-r} >m \iff \frac{mn}{m+\alpha}<r,
$$
which implies $s>m>r,$ we discover
\begin{align*}
 \int_E \eta^m w^{mt+m}\;d\m \leq&\  A(\rho)^{-m} \int_E \left(\eta w^t\right)^m d\m\\ 
\leq&\  A(\rho)^{-m}\m(B_\rho)^{1-{r}/{s}}\left(\int_E \left[\left(\eta w^t\right)^\frac{m}{r}\right]^s d\m\right)^{{r}/{s}}\\ 
\leq&\ C \rho^{\alpha(1-r/s)} \int_E \left|D(\eta w^t)^\frac{m}{r}\right|^r\; dx \\ 
\leq&\ C \int_E (\eta w^t)^{m-r}\left|D(\eta w^t)\right|^r\; dx
\end{align*}
where $0<\rho<\mathrm{diam}\,\Omega$ has been used in the last bound. Applying the Young inequality, we obtain
\begin{equation*}
 \int_E \eta^m w^{mt+m}\;d\m \leq \varepsilon\int_E \left|D(\eta w^t)\right|^m\; dx + C\varepsilon^{\frac{r}{r-m}} \int_E (\eta w^t)^{m}\; dx.
\end{equation*}

Further on, choosing $\varepsilon=\frac{1}{2CN(t)}$ above and having in mind $N(t)\geq1,$ we get from 
 \eqref{33} 
\begin{equation}\label{35}
\int_E |D(\eta w^t)|^m\; dx \leq CK(t)\int_E w^{mt}\left(\eta w + {|D\eta|}\right)^m\; dx
\end{equation}
with
$$
K(t):=\big(N(t)\big)^\theta \quad \mathrm{for}\quad \theta = \frac{m}{m-r_0}.
$$

Let us take now  a cut-off function $\eta \in C^\infty_0(B_r)$ such that $0\leq\eta\leq1,$ $\eta=1$ on $B_s$ and $|D\eta|\leq\frac{c}{r-s}$ where $0<s<r\leq\rho.$ Employing the Sobolev inequality, we get 
\begin{align}\label{36}
\left(\int_{B_s} \left(w^t\right)^{m^*} \; dx\right)^{{m}/{{m^*}}} =&\
\left(\int_{B_s} \left(\eta w^t\right)^{m^*} \; dx\right)^{{m}/{{m^*}}}\\
\nonumber
 \leq&\ C \left( \int_{B_r} |D(\eta w^t)|^\frac{{m^*} n}{{m^*}+n}\; dx \right)^{{m({m^*}+n)}/{({m^*} n)}}\\
 \nonumber
\leq&\ C\left(\int_{B_r} |D(\eta w^t)|^m \; dx \right)|B_r|^{{m({m^*}+n)}/{({m^*} n)}-1} \\ 
\nonumber
\leq&\ \frac{CK(t)}{(r-s)^m}\rho^\frac{m({m^*}+n)-{m^*} n}{{m^*}} \int_{B_r} w^{mt}\; dx
\end{align}
from \eqref{35}  since $\eta w \leq \frac{1}{\rho} \leq \frac{1}{r-s}$.

Let $\rho_k=\rho\left(\frac{1}{2}+\frac{1}{2^{k+2}}\right)$ for $k=0,1,\ldots$ and
let $t_0>0$ be any number such that $mt_0\leq\sigma_0$ with $\sigma_0$ appearing in \eqref{32}. Making use of the simple inequalities $e^{2mt}\geq (1+t)^{2m}\geq 1+ t^{2m}$ valid for all $t\geq0$ and all $m\geq \frac{1}{2},$ and remembering the properties of the function $K(t),$ we have
$$
K(t) = \left(1+t^m\right)^\theta \leq e^{2m\theta\sqrt{t}}\quad \forall t>0.
$$
Thus, taking $t=t_0\left(\frac{{m^*}}{m}\right)^k>0$ and using \eqref{36} with
$s=\rho_{k+1}$ and $r=\rho_k,$ we get
\begin{align*}
& \left(\int_{B_{\rho_{k+1}}} w^{mt_0\left(\frac{{m^*}}{m}\right)^{k+1}}\; dx\right)^{\left({m}/{{m^*}}\right)^{k+1}}\\
&\qquad \leq C^{mk\left(\frac{m}{{m^*}}\right)^k} \rho^{\frac{n(m-{m^*})}{{m^*}}\left(\frac{m}{{m^*}}\right)^k} e^{2m\theta\sqrt{t_0}\left(\frac{m}{{m^*}}\right)^\frac{k}{2}}\left(\int_{B_{\rho_k}} w^{mt_0\left(\frac{{m^*}}{m}\right)^k}\; dx\right)^{\left({m}/{{m^*}}\right)^k}
\end{align*}
for $k=0,1,\ldots.$ Iteration of these inequalities from $0$ to $N\in\mathbb{N}$ yields
\begin{align*}
& \left(\int_{B_{\rho_{N+1}}} w^{mt_0\left(\frac{{m^*}}{m}\right)^{N+1}}\; dx\right)^{\left({m}/{{m^*}}\right)^{N+1}}\\
&\qquad \leq 
C^{m\sum_{k=0}^N k\left(\frac{m}{{m^*}}\right)^{k}}
e^{2m\theta\sqrt{t_0}\sum_{k=0}^N \left(\frac{m}{{m^*}}\right)^{\frac{k}{2}}}
\rho^{\frac{n(m-{m^*})}{{m^*}}\sum_{k=0}^N \left(\frac{m}{{m^*}}\right)^{k}}
\int_{B_{\rho_0}} w^{mt_0}\; dx,
\end{align*}
and passage to the limit as $N\to+\infty$
gives
\begin{equation}\label{37}
\esssup_{B_{{\rho}/{2}}} w^{mt_0}=
\left(M(\rho)-M\left(\frac{\rho}{2}\right)+A(\rho)\right)^{-mt_0} \leq C\rho^{-n}\int_{B_{3\rho/4}} w^{mt_0}\; dx.
\end{equation}
This way, it follows from \eqref{32} and \eqref{37} that
\begin{equation}\label{38}
\rho^{-n}\int_{B_{3\rho/4}} w^{-mt_0}\; dx \leq C\left(M(\rho)-M\left(\frac{\rho}{2}\right)+A(\rho)\right)^{mt_0}
\end{equation}
for any $t_0>0$ such that $0<mt_0\leq\sigma_0.$

To proceed further, we set $\sigma=-mt_1$ where
$t_1=t_1(m,n)<0$ will be chosen in the sequel.
Let $0<\sigma<m-1$ and define $\sigma_1=\sigma\left(\frac{m}{{m^*}}\right)^\kappa$ where $\kappa$ is a positive integer for which $m-1 \leq \sigma_0\left(\frac{{m^*}}{m}\right)^\kappa$ and $\sigma_0$ is taken from \eqref{32}.   Obviously, $0<\sigma_1\left(\frac{{m^*}}{m}\right)^k\leq\sigma$ for $0\leq k\leq\kappa.$ Thus 
$\frac{1-m}{m}<-\frac{\sigma}{m}\leq-\frac{\sigma_1}{m}\left(\frac{{m^*}}{m}\right)^k$ and therefore
$$
K\left(-\frac{\sigma_1}{m}\left(\frac{{m^*}}{m}\right)^k\right)\leq K\left(-\frac{\sigma}{m}\right)
$$ 
for $0\leq k\leq\kappa$ since $K$ is a decreasing function on $\left(\frac{1-m}{m},0\right).$

We take now $\rho_k=\frac{\rho}{4}\left(3-\frac{k}{\kappa+1}\right)$ for $0\leq k \leq \kappa+1$ and  apply \eqref{36} with $s=\rho_{k+1},$ $r=\rho_k$ and $t=-\frac{\sigma_1}{m}\left(\frac{{m^*}}{m}\right)^k$ in order to get
\begin{align*}
&\left(\int_{B_{\rho_{k+1}}} w^{-\sigma_1\left(\frac{{m^*}}{m}\right)^{k+1}}\; dx\right)^{\left({m}/{{m^*}}\right)^{k+1}}\\
&\qquad\leq \left(CK\left(-\frac{\sigma}{m}\right)4^m(\kappa+1)^m \rho^{\frac{n(m-{m^*})}{{m^*}}} \right)^{\left(\frac{m}{{m^*}}\right)^k}\left(\int_{B_{\rho_k}} w^{-\sigma_1\left(\frac{{m^*}}{m}\right)^k}\; dx\right)^{\left({m}/{{m^*}}\right)^k}
\end{align*}
for $0\leq k\leq\kappa.$ Iteration of these inequalities for $0\leq k\leq\kappa$ gives
\begin{align*}
&\left(\int_{B_{\rho/2}} w^{-\sigma_1\left(\frac{{m^*}}{m}\right)^{\kappa+1}}\; dx\right)^{\left({m}/{{m^*}}\right)^{\kappa+1}}\\
&\qquad \leq \left(CK\left(-\frac{\sigma}{m}\right)4^m(\kappa+1)^m 
\right)^{\sum_{k=0}^\kappa\left(\frac{m}{{m^*}}\right)^k}\rho^{\frac{n(m-{m^*})}{{m^*}}\sum_{k=0}^\kappa\left(\frac{m}{{m^*}}\right)^k}
\int_{B_{3\rho/4}} w^{-\sigma_1}\; dx,
\end{align*}
whence
$$
\rho^{-n}\int_{B_{\rho/2}} w^{-\sigma_1\left(\frac{{m^*}}{m}\right)^{\kappa+1}}\; dx \leq C\left(\rho^{-n}\int_{B_{3\rho/4}} w^{-\sigma_1}\; dx\right)^{\left({{m^*}}/{m}\right)^{\kappa+1}}.
$$

Remembering $0<\sigma_1<\sigma_0,$ we obtain from \eqref{38} with $\sigma_1=mt_0$ that
\begin{equation}\label{39}
\rho^{-n}\int_{B_{\rho/2}} w^{t_1{m^*}}\; dx \leq C\left(M(\rho)-M\left(\frac{\rho}{2}\right)+A(\rho)\right)^{-t_1{m^*}}
\end{equation}
for each $t_1<0$ such that $0<-mt_1<m-1.$

Take now $v = \eta^m e^{\frac{\Lambda}{\gamma}\tilde{u}}\tilde{u}$ as a test function in \eqref{6}. Keeping in mind \eqref{4}, \eqref{5} and \eqref{10}, we obtain
\begin{align}\label{40}
& \int_{B_{\rho/2}} \eta^m{|Du|}^m\; dx \leq CM(\rho)\underbrace{\int_{B_{\rho/2}} \eta^{m-1}{|Du|}^{m-1}|D\eta|\; dx}_{S_1}\\ 
\nonumber
&\quad+ C{M(\rho)}\underbrace{ \int_{B_{\rho/2}} \eta^{m-1}|D\eta|(1+\varphi)\; dx}_{S_2} + C \underbrace{\int_{B_{\rho/2}} \eta^m \; d\m}_{S_3}+ C \underbrace{\int_{B_{\rho/2}} \eta^m \; dx}_{S_4}.
\end{align}

Fix $\eta\in C^\infty_0\left(B_{\rho/2}\right)$ such that $0\leq\eta\leq1$ and $|D\eta|\leq c/\rho.$ To estimate $S_1$, we take a $t_2<0$ such that $1<(1+t_2)m<\frac{{m^*}}{m}.$ Thus
\begin{align*}
S_1 &= \int_{B_{\rho/2}}\big(\eta w^{1+t_2}|Du|\big)^{m-1}\big(w^{-(1+t_2)(m-1)}|D\eta|\big)\; dx\\
& = C\int_{B_{\rho/2}}\big(\eta |D(w^{t_2})|\big)^{m-1}\big(w^{-(1+t_2)(m-1)}|D\eta|\big)\; dx\\
&\leq C\left(\int_{B_{\rho/2}}\big(\eta |D(w^{t_2})|\big)^m\; dx\right)^{1-{1}/{m}}
\left(\int_{B_{\rho/2}} \big(w^{-(1+t_2)(m-1)}|D\eta|\big)^m\; dx\right)^{{1}/{m}}.
\end{align*}
The two terms above will be estimated with the aid of \eqref{35} and \eqref{39}, respectively. Precisely,
applying \eqref{35} with $t=t_2$ and \eqref{39} with $t_1=\frac{mt_2}{m^*}$, we have
\begin{align*}
\int_{B_{\rho/2}}\eta^m |D(w^{t_2})|^m\; dx & \leq C\int_{B_{\rho/2}}|D(\eta w^{t_2})|^m\; dx+\int_{B_{\rho/2}}\big(w^{t_2}|D\eta|\big)^m\; dx\\
& \leq C\int_{B_{\rho/2}}w^{mt_2}\big(\eta^m+|D\eta|^m\big)\; dx\\
&\leq C\left(M(\rho)-M\left(\frac{\rho}{2}\right)+A(\rho)\right)^{-mt_2}\rho^{n-m}.
\end{align*}

In the same manner,  \eqref{39} with  $t_1=-\frac{(1+t_2)(m-1)m}{{m^*}}$ leads to
$$
\int_{B_{\rho/2}} \big(w^{-(1+t_2)(m-1)}|D\eta|\big)^m\; dx
\leq C\left(M(\rho)-M\left(\frac{\rho}{2}\right)+A(\rho)\right)^{(1+t_2)(m-1)m}\rho^{n-m}
$$
whence we have
$$
\int_{B_{\rho/2}} \eta^{m-1}{|Du|}^{m-1}|D\eta|\; dx 
\leq  C{\left(M(\rho)-M\left(\frac{\rho}{2}\right)+A(\rho)\right)}^{m-1}\rho^{n-m}.
$$

Further on, from the Young inequality and the definition of $\m$, we discover
\begin{align*}
S_2 &\leq CA(\rho)^{-1} \int_{B_{\rho/2}} \eta^m \left(1+\varphi^\frac{m}{m-1}\right)\;dx + CA(\rho)^{m-1} \int_{B_{\rho/2}} |D\eta|^m\;dx\\
&\leq C A(\rho)^{-1} \left(\int_{B_{\rho/2}} \eta^m \;d\m + \int_{B_{\rho/2}} \eta^m \;dx\right) + CA(\rho)^{m-1} \int_{B_{\rho/2}} |D\eta|^m\;dx\\
&\leq C A(\rho)^{-1}[S_3+S_4] + C A(\rho)^{m-1}  \rho^{n-m}.
\end{align*}
But since $\m(B_{\rho/2})\leq A(\rho)^m \rho^{\alpha}$ from \eqref{M_est2}, we have
$$
S_3 \leq C A(\rho)^m \rho^{\alpha} \leq C A(\rho)^m \rho^{n-m}
$$
because of $0<\rho<\mathrm{diam}\,\Omega$ and  $\alpha>n-m.$
Using the Poincar\'e inequality, we dicover
$$
S_4 \leq C \rho^m\int |D\eta|^m\; dx \leq C{A(\rho)}^m\rho^{n-m}
$$
whence 
$$
S_2 \leq C A(\rho)^{m-1}  \rho^{n-m}.
$$

Therefore, the bounds for $S_1$--$S_4$ and \eqref{40} yield
\begin{equation}\label{41}
\int_{B_{\rho/2}} \eta^m{|Du|}^m \;dx \leq  C\big(M(\rho)+A(\rho)\big){\left(M(\rho)-M\left(\frac{\rho}{2}\right)+A(\rho)\right)}^{m-1}\rho^{n-m}.
\end{equation}

On the other hand, we have
$$
\int_{B_{\rho/2}} {|D(\eta w^{-1})|}^m \leq  C \left(
\int_{B_{\rho/2}} w^{-m}|D\eta|^m\;dx + \int_{B_{\rho/2}} \eta^m{|Du|}^m \;dx \right)
$$
and, keeping in mind  $w^{-1}\leq M(\rho)+A(\rho),$ we apply \eqref{39} with $t_1=\frac{1-m}{{m^*}}$  to get
\begin{align*}
\int_{B_{\rho/2}} w^{-m}|D\eta|^m\;dx \leq&\ C \rho^{-m} \int_{B_{\rho/2}} w^{-1} w^{1-m}\;dx\\
\leq&\ 
C\big(M(\rho)+A(\rho)\big) \rho^{-m} \int_{B_{\rho/2}}  w^{1-m}\;dx\\
\leq&\ C\big(M(\rho)+A(\rho)\big){\left(M(\rho)-M\left(\frac{\rho}{2}\right)+A(\rho)\right)}^{m-1}\rho^{n-m}.
\end{align*}

The last bound, together with \eqref{41} gives
$$
\int_{B_{\rho/2}} {|D(\eta w^{-1})|}^m\;dx \leq  C\big(M(\rho)+A(\rho)\big){\left(M(\rho)-M\left(\frac{\rho}{2}\right)+A(\rho)\right)}^{m-1}\rho^{n-m},
$$
and this completes the proof of Lemma~\ref{lemM}.
\end{proof}

Once having the result of Lemma~\ref{lemM} it is easy to extend the H\"older continuity of the weak solutions up to the boundary of $\Omega,$ thanks of the $m$-thickness condition \eqref{3}.

\subsection{Proof of Theorem \ref{thm2}}
Let $x_0\in \partial\Omega$ be any point and set $B_\rho =B_\rho(x_0)$ for the sake of brevity.
Since $(m-1)p+\lambda>n$, $mq+\mu>n$ and $m+\omega>n$, there exist positive constants $\overline{\lambda}$, $\overline{\mu}$ and $\overline{\omega}$ such that $n<(m-1)p+\overline{\lambda}<(m-1)p+\lambda$, $n<mq+\overline{\mu}<mq+\mu$ and $n<m+\overline{\omega}<m+\omega$. It follows from Proposition~\ref{MSa} that $L^{p,\lambda}(B_\rho)\subset L^{p,\overline{\lambda}}(B_\rho)$, $L^{q,\mu}(B_\rho)\subset L^{q,\overline{\mu}}(B_\rho)$ and $\mathcal{R}^{1,\omega}(B_\rho)\subset \mathcal{R}^{1,\overline{\mu}}(B_\rho).$ In particular, since $\varphi\in L^{p,\lambda}$, $\psi\in L^{q,\mu}$ and $\nu\in \mathcal{R}^{1,\omega}$, we get from the definition
\begin{equation}\label{A-bar}
\begin{aligned}
{\|\varphi\|}_{L^{p,\overline{\lambda}}(B_\rho)}^p \leq	 {\|\varphi\|}_{L^{p,\lambda}(B_\rho)}^p\rho^{\lambda-\overline{\lambda}},
&\quad {\|\psi\|}_{L^{q,\overline{\mu}}(B_\rho)} \leq {\|\psi\|}_{L^{q,\mu}(B_\rho)}\rho^{\mu-\overline{\mu}} \\
  {\|\nu\|}_{\mathcal{R}^{1,\overline{\mu}}(B_\rho)} &\leq {\|\nu\|}_{\mathcal{R}^{1,\omega}(B_\rho)}\rho^{\omega-\overline{\omega}}.
\end{aligned}
\end{equation}
Indeed, for any $B_s(y)\subset\mr^n,$ we have
\begin{align*}
\frac{1}{s^{\overline{\lambda}}}\int_{B_\rho\cap B_s(y)} \varphi^p\; dx &= s^{\lambda-\overline{\lambda}} \frac{1}{s^{\lambda}}\int_{B_\rho\cap B_s(y)} \varphi^p\; dx \\
& \leq \rho^{\lambda-\overline{\lambda}} \frac{1}{s^{\lambda}}\int_{B_\rho\cap B_s(y)} \varphi^p\; dx 
\leq {\|\varphi\|}_{L^{p,\lambda}(B_\rho)}^p\rho^{\lambda-\overline{\lambda}}
\end{align*}
if $\rho \ge s$ and 
\begin{align*}
\frac{1}{s^{\overline{\lambda}}}\int_{B_\rho\cap B_s(y)} \varphi^p\; dx \leq&\ \frac{1}{\rho^{\overline{\lambda}}}\int_{B_\rho\cap B_s(y)} \varphi^p\; dx
\leq \rho^{\lambda-\overline{\lambda}} \frac{1}{\rho^{\lambda}}\int_{B_\rho} \varphi^p\; dx \\
\leq&\ {\|\varphi\|}_{L^{p,\lambda}(B_\rho)}^p\rho^{\lambda-\overline{\lambda}}
\end{align*}
if $\rho \leq s$ and we can derive the second and third inequalities in \eqref{A-bar} in an analoguous way.

Take now a cut-off function $\eta\in C^\infty_0\left(B_{\rho/2}\right)$ so that $0\leq\eta\leq1,$ $\eta=1$ on $B_{\rho/4}$ and $|D\eta|\leq C/\rho.$ It follows from the
 $m$-thickness conditions \eqref{3} that
$$
\mathrm{Cap}_m\big(B_{\rho/4}\setminus\Omega,B_{\rho/2}\big)\geq A_\Omega\,
\mathrm{Cap}_m\big(B_{\rho/4},B_{\rho/2}\big)
= C\rho^{n-m}\quad \forall \rho\leq r_0.
$$
On the other hand,
$$
B_{\rho/4}\setminus\Omega \subset \left\{x\in B_{\rho/4}\colon\ \tilde{u}(x)=0\right\}
$$
and therefore
$$
\mathrm{Cap}_m\Big(\left\{x\in B_{\rho/4}\colon\ \tilde{u}(x)=0\right\},B_{\rho/2}\Big)\geq 
\mathrm{Cap}_m\big(B_{\rho/4}\setminus\Omega,B_{\rho/2}\big).
$$
We have $\eta w^{-1}=M(\rho)+\overline{A}(\rho)$ on $\left\{x\in B_{\rho/4}\colon\ \tilde{u}(x)=0\right\}$ with
$$
\overline{A}(\rho) = \rho + {\|\varphi\|}_{L^{p,\overline{\lambda}}(B_\rho)}^\frac{1}{m-1} + {\|\psi\|}_{L^{q,\overline{\mu}}(B_\rho)}^\frac{1}{m} + {\|\nu\|}_{\mathcal{R}^{1,\overline{\mu}}(B_\rho)}^\frac{1}{m}
$$
and thus
$$
\int_{B_{\rho/2}} \left|
D\left(\frac{\eta w^{-1}}{M(\rho)+\overline{A}(\rho)}
\right)\right|^m\;dx \geq
\mathrm{Cap}_m\Big(\left\{x\in B_{\rho/4}\colon\ \tilde{u}(x)=0\right\},B_{\rho/2}\Big).
$$

Putting together all these inequalities, 
Lemma~\ref{lemM} gives
\begin{align*}
\rho^{n-m} \leq&\ C\, \mathrm{Cap}_m\Big(\left\{x\in B_{\rho/4}\colon\ \tilde{u}(x)=0\right\},B_{\rho/2}\Big) \\ 
\leq&\ C{\left((M(\rho)+\overline{A}(\rho)\right)}^{-m}\int_{B_{\rho/2}} {|D(\eta w^{-1})|}^m\\ 
\leq&\  C{\left(M(\rho)+\overline{A}(\rho)\right)}^{1-m}{\left(M(\rho)-M\left(\frac{\rho}{2}\right)+\overline{A}(\rho)\right)}^{m-1}\rho^{n-m}.
\end{align*}
Thus, we find 
$$
M\left(\frac{\rho}{2}\right) \leq \frac{C-1}{C}\left(M(\rho)+\overline{A}(\rho)\right)
$$
for all $\rho\leq R$ where $R$ depends on $r_0$ from \eqref{3},
and it follows from Proposition~\ref{Pinterpolation} that 
$$
M(\rho) \leq C \left(\left(\frac{\rho}{R}\right)^{\alpha'} M(R) + \overline{A}(\rho^\tau R^{1-\tau})\right)
$$
for any $0<\tau<1$ and $\rho\leq R$ with an exponent $\alpha'>0.$ Since 
$$
\overline{A}(\rho^\tau R^{1-\tau})\leq C(\rho^\tau R^{1-\tau})^{\alpha''}
$$ 
with $\alpha'' = \min\{\frac{\lambda-\overline{\lambda}}{(m-1)p},\frac{\mu-\overline{\mu}}{mq},\frac{\omega-\overline{\omega}}{m},1\}$ as it follows from \eqref{A-bar}, we have
$$
M(\rho) \leq C \rho ^ \alpha,
$$
where $\alpha = \min\{\alpha',\tau\alpha''\}.$

Repeating the above procedure with $-u(x)$ instead of $u(x),$ we get finally
\begin{equation}\label{boundary_oscillation}
\sup_{B_\rho(x_0)} |u| \leq C\rho^\alpha
\end{equation}
for all $x_0\in\partial\Omega$ and all $\rho\in(0,R).$

\bigskip

With Corollary~\ref{crl1} and \eqref{boundary_oscillation} at hand, it is standard matter to get  H\"older continuity up to the boundary as claimed in Theorem~\ref{thm2}. For, we will distinguish between various cases for arbitrary two points $x,y\in\overline{\Omega}.$ 

\textit{Case 1: $\mathrm{dist\,}(x,\partial\Omega)\geq R/2$ and $\mathrm{dist\,}(y,\partial\Omega)\geq R/2.$} Then 
$$
\frac{|u(x)-u(y)|}{|x-y|^\alpha} \leq H
$$
as it follows from Corollary~\ref{crl1}. 

\textit{Case 2: $0<\mathrm{dist\,}(x,\partial\Omega),\ \mathrm{dist\,}(y,\partial\Omega)<R/2.$} Let $\delta=\mathrm{dist\,}(y,\partial\Omega).$ We have
$$
\frac{|u(x)-u(y)|}{|x-y|^\alpha} \leq \frac{\osc_{B_{|x-y|}(y)} u}{|x-y|^{\alpha}},
$$
while
$$
\osc_{B_{|x-y|}(y)} u \leq C |x-y|^\alpha \left(
\delta^{-\alpha}\osc_{B_\delta(y)} u+1\right)
$$
as consequence of \cite[Theorem~4.1]{L}. This way,
$$
\frac{|u(x)-u(y)|}{|x-y|^\alpha} \leq
C  \left(
\delta^{-\alpha}\osc_{B_\delta(y)} u+1\right)
$$
for each $x\in B_\delta(y)$ with a suitable exponent $\alpha\in(0,1).$ 
Pick now a point $y_0\in\partial\Omega$ with the property $|y_0-y|=\mathrm{dist\,}(y,\partial\Omega).$ Since $B_\delta(y)\subset B_{2\delta}(y_0),$ we have from \eqref{boundary_oscillation}
that
$$
\osc_{B_\delta(y)} u \leq \osc_{B_{2\delta}(y_0)}u \leq 2 \sup_{B_{2\delta}(y_0)} |u| \leq C\delta^\alpha
$$
whence
$$
\frac{|u(x)-u(y)|}{|x-y|^\alpha} \leq H
$$
for all $x\in B_\delta(y)$ and all $y\in\Omega$  with $\mathrm{dist\,}(y,\partial\Omega)< R/2.$ Further, if $|x-y|\geq\delta$
and $\mathrm{dist\,}(x,\partial\Omega)< R/2,$
 take a point $x_0\in\partial\Omega$ with the property $|x_0-x|=\mathrm{dist\,}(x,\partial\Omega).$ Since
$$
|x-x_0| \leq |x-y_0| \leq |x-y|+|y-y_0| = |x-y|+ \delta \leq 2|x-y|
$$
and $u(x_0)=u(y_0)=0,$
we have
\begin{align*}
|u(x)-u(y)| \leq&\ |u(x)-u(x_0)| + |u(x_0)-u(y_0)| + |u(y_0)-u(y)| \\
\leq&\ C \big(|x-x_0|^\alpha + |y-y_0|^\alpha \big) \\
\leq&\ C|x-y|^\alpha,
\end{align*}
whence
$$
\frac{|u(x)-u(y)|}{|x-y|^\alpha} \leq H
$$
for all $x,y\in\Omega$ with $\mathrm{dist\,}(x,\partial\Omega), \mathrm{dist\,}(y,\partial\Omega)\in(0, R/2)$ and such that $|x-y|\geq\delta.$

\textit{Case 3: $\mathrm{dist\,}(x,\partial\Omega)\geq R/2$ and $0<\mathrm{dist\,}(y,\partial\Omega)< R/2.$}  It suffices to take a point $z$ lying on the segment with end $x$ and $y$ and such that $\mathrm{dist\,}(z,\partial\Omega)=R/2$ to get
$$
\frac{|u(x)-u(y)|}{|x-y|^\alpha} \leq 
\frac{|u(x)-u(z)|}{|x-z|^\alpha} +
\frac{|u(z)-u(y)|}{|z-y|^\alpha}.
$$
Thus, the desired estimate reduces to the cases already considered.

\textit{Case 4: $y\in\partial\Omega.$}
It follows from \eqref{boundary_oscillation} that
$$
\frac{|u(x)-u(y)|}{|x-y|^\alpha} \leq H
$$
for all $x\in\overline\Omega$ such that $|x-y|<R,$ while
$$
\frac{|u(x)-u(y)|}{|x-y|^\alpha} \leq 2 \|u\|_{L^\infty(\Omega)} R^{-\alpha} \leq H
$$
if $|x-y|\geq R,$ as consequence of \eqref{10}.

It remains to take the smallest of the exponents $\alpha$ in the above considerations to complete the proof of Theorem~\ref{thm2}.\hfill $\qed$

\subsection{H\"older continuity under natural structure conditions}

Theorem~\ref{thm2} asserts global H\"older continuity of the weak solutions to the Dirichlet problem \eqref{1} under
the same hypotheses which ensure global boundedness of the solutions. However,  it happens very often that one already disposes of an \textit{a~priori} bound for $\|u\|_{L^\infty(\Omega)}$ as consequence, for example, of strong monotonicity of the principal part $\ba(x,u,Du)$ with respect to $Du,$ or sign condition on $u.b(x,u,Du)$ (see e.g. \cite{JMAA,NoDEA} and the references therein), etc. What is the natural question to arise in this situation is whether the \textit{bounded} weak solutions to \eqref{1} remain globally H\"older continuous in ${\Omega}$ if the $|\xi|^{m\left(1-\frac{1}{{m^*}}\right)}$-growth of $b(x,z,\xi)$ in \eqref{4} is relaxed to $|\xi|^m.$

More precisely, let us weaken the \textit{controlled}  growth assumptions \eqref{4} to the \textit{natural structure} conditions of Ladyzhenskaya and Ural'tseva. In other words, let $\varphi\in L^{p,\lambda}(\Omega)$ with $p>\frac{m}{m-1},$ $\lambda\in(0,n)$ and $(m-1)p+\lambda>n;$
$\psi\in L^{q,\mu}(\Omega)$ with $q\geq 1,$ $\mu\in(0,n)$ and $mq+\mu>n,$ and suppose there exist 
a non-decreasing function $\Lambda(t)$ and a non-increasing function $\gamma(t),$ both positive and continuous, such that 
\begin{equation}\label{4natural}
\begin{cases}
|\ba(x,z,\xi)| \leq \Lambda(|z|)\left(\varphi(x)+|\xi|^{m-1}\right),\\[4pt]
|b(x,z,\xi)| \leq \Lambda(|z|)\left(\psi(x)+|\xi|^{m}\right)
\end{cases}
\end{equation}
and
\begin{equation}\label{5natural}
\ba(x,z,\xi)\cdot\xi\geq\gamma(|z|)|\xi|^m-\Lambda(|z|)\varphi(x)^\frac{m}{m-1}
\end{equation}
for a.a. $x\in\Omega$ and all $(z,\xi)\in\mr\times\mr^n.$ 

Indeed, a \textit{bounded weak solution} to \eqref{1} is
a function $u\in L^\infty(\Omega)\cap W^{1,m}_0(\Omega)$ such that
$$
\int_\Omega \ba(x,u,Du)\cdot Dv\; dx + \int_\Omega b(x,u,Du)v\; dx + \int_\Omega v\; d\nu = 0\quad \forall v \in L^\infty(\Omega)\cap  W^{1,m}_0(\Omega).
$$

It is worth noting that in the proof of Corollary~\ref{crl1} above, we reduced \eqref{4} and \eqref{5} just to  \eqref{4natural} and \eqref{5natural}, respectively. Further, it is easy to check that the result of Lemma~\ref{lemM} remains valid for \textit{bounded} weak solutions to \eqref{1} if \eqref{4natural} and \eqref{5natural} are required instead of \eqref{4} and \eqref{5}. This way, we have
\begin{theorem}\label{thm3}
Under the hypotheses \eqref{2}, \eqref{2-1},
 \eqref{3}, \eqref{4natural} and \eqref{5natural}, each {\em bounded} weak solution of the Dirichlet problem \eqref{1} is  H\"older continuous in $\overline\Omega$ with H\"older exponent and constant depending on the data of \eqref{1} and
on $\|u\|_{L^\infty(\Omega)}.$  
\end{theorem}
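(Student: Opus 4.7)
The plan is to reduce Theorem~\ref{thm3} to the machinery already developed for Theorem~\ref{thm2} by exploiting the a~priori bound $\|u\|_{L^\infty(\Omega)}\le M$. First, since $u$ is bounded, the monotone functions $\Lambda(|u|)$ and $\gamma(|u|)$ in \eqref{4natural}--\eqref{5natural} may be replaced by the constants $\Lambda_M:=\Lambda(M)$ and $\gamma_M:=\gamma(M)$. But then the structure conditions take precisely the reduced form
$|\ba|\le\Lambda_M(\varphi+|\xi|^{m-1})$, $|b|\le\Lambda_M(\psi+|\xi|^m)$ and $\ba\cdot\xi\ge\gamma_M|\xi|^m-\Lambda_M\varphi^{m/(m-1)}$ that was actually used in the proof of Corollary~\ref{crl1}: indeed, after absorbing $|u|^{\ell(m-1)/m}$, $|u|^{\ell-1}$ and $|u|^\ell$ into modified $\varphi',\psi',\varphi''$, the controlled growth case was already of natural type (modulo the improved gradient exponent $m(\ell-1)/\ell<m$, which the present setting relaxes). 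Lieberman's interior Harnack-type theorem \cite[Theorem 4.1]{L} applies to the resulting structure, so \emph{interior} H\"older continuity of $u$ is immediate, as is the interior oscillation estimate needed for Case~1 of the proof of Theorem~\ref{thm2}.

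Next, I would verify that Lemma~\ref{lemM} still holds for bounded weak solutions under \eqref{4natural}--\eqref{5natural}. The proof of Lemma~\ref{lemM} already uses the Ladyzhenskaya--Ural'tseva exponential test function $v=\eta^m e^{(\Lambda/\gamma)\overline u}v_0$, which is precisely the device designed to cope with the natural $|Du|^m$-growth of $b$. Under the natural conditions the same computation gives
\[
\int \eta^m e^{\frac{\Lambda_M}{\gamma_M}\overline u}\Big(\gamma_M\beta w^{\beta+1}-\Lambda_M\big(\beta w^{\beta+1}+\tfrac{\Lambda_M}{\gamma_M}v_0\big)\Big)|Du|^m\,dx \le (\text{lower-order terms}),
\]
and the factor in parentheses is bounded below by a positive multiple of $\beta w^{\beta+1}$ for the relevant range of $\beta$, exactly as in the original argument. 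The right-hand side consists of the terms $J_1$--$J_7$ of Lemma~\ref{lemM} but now without the $|u|^{\ell(m-1)/m}$, $|u|^{\ell-1}$ summands, hence \emph{strictly simpler} than the ones already estimated. In particular no appeal to Lemma~\ref{lemHIG} (higher integrability) is required. The rest of the proof of Lemma~\ref{lemM}---the John--Nirenberg step via Proposition~\ref{Pj} and the Moser iteration producing \eqref{39} and \eqref{41}---goes through verbatim, with all constants now depending on $\Lambda_M$, $\gamma_M$, i.e.\ on $\|u\|_{L^\infty(\Omega)}$.

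Once Lemma~\ref{lemM} is in hand, the proof of Theorem~\ref{thm3} mirrors the proof of Theorem~\ref{thm2} line by line. The $P$-thickness condition \eqref{3} and the identification $B_{\rho/4}\setminus\Omega\subset\{\overline u=0\}$ yield the capacity lower bound $\mathrm{Cap}_m(\{\overline u=0\}\cap B_{\rho/4},B_{\rho/2})\ge C\rho^{n-m}$; combined with the gradient bound of Lemma~\ref{lemM} this produces
\[
M(\rho/2)\le \tfrac{C-1}{C}\bigl(M(\rho)+\overline A(\rho)\bigr),\qquad \rho\le R,
\]
and Proposition~\ref{Pinterpolation} converts this into $\sup_{B_\rho(x_0)}|u|\le C\rho^\alpha$ for each $x_0\in\partial\Omega$. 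The four-case chaining argument (interior/interior, near boundary/near boundary, mixed, and $y\in\partial\Omega$) concludes global H\"older continuity with exponent and constant depending on the data and on $\|u\|_{L^\infty(\Omega)}$.

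The main obstacle is verifying that the exponential-test-function computation in Lemma~\ref{lemM} really does absorb the $|Du|^m$-term from the natural growth of $b$ into the coercivity term; this is the classical Ladyzhenskaya--Ural'tseva balance, and choosing the exponent of the exponential to be $\Lambda_M/\gamma_M$ is exactly what forces the cancellation. All remaining steps are either strictly easier than in the controlled-growth setting (because the polynomial-in-$|u|$ nonlinearities are absent) or identical to steps already carried out in the proofs of Corollary~\ref{crl1} and Theorem~\ref{thm2}.
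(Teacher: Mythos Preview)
Your proposal is correct and follows precisely the same route as the paper: reduce to constant $\Lambda_M,\gamma_M$ via the $L^\infty$-bound, observe that the proof of Corollary~\ref{crl1} already operated under the natural structure form, verify that Lemma~\ref{lemM} survives under \eqref{4natural}--\eqref{5natural} (the exponential test function being designed exactly for this), and then rerun the boundary oscillation argument of Theorem~\ref{thm2}. The paper's own justification is terser---it simply asserts that Lemma~\ref{lemM} ``remains valid'' under the natural conditions---whereas you spell out the $\Lambda_M/\gamma_M$ cancellation mechanism; but the strategy is identical.
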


\end{document}